\theoremstyle{definition}
\newtheorem{thm}{Theorem}[section]
\newtheorem*{thm*}{Theorem}
\newtheorem{lemm}[thm]{Lemma}
\newtheorem{prop}[thm]{Proposition}
\begin{document}

\title{An example of non-compact totally complex submanifolds of compact quaternionic K\"{a}hler symmetric spaces}
\author{Yuuki Sasaki}
\date{}
\maketitle

\begin{abstract}

\rm{
Totally complex submanifolds of a quaternionic K\"{a}hler manifold are analogous to complex submanifolds of a K\"{a}hler manifold.
In this paper, we construct an example of a non-compact totally complex submanifold of maximal dimension of a compact quaternionic K\"{a}hler symmetric space, except for  quaternionic projective spaces.
A compact Lie group acts on our example isometrically, and this action is of cohomogeneity one.
Our example is a holomorphic line bundle over some Hermitian symmetric space of compact type. 
Moreover, each fiber is a totally geodesic submanifold of the ambient quaternionic K\"{a}hler symmetric space and our example is a ruled submanifold.
Our construction relies on the action of a subgroup of the isometry group and a maximal totally geodesic sphere with maximal sectional curvature known as a Helgason sphere.
Furthermore, we prove that there exist no compact submanifolds of the same dimension that contain our example as an open part, except where the ambient quaternionic K\"{a}hler symmetric space is a complex Grassmannian.
}

\end{abstract}


\section{Introduction}

A quaternionic K\"{a}hler manifold is defined as a Riemannian manifold whose holonomy group is a subgroup of $Sp(1) \cdot Sp(n)$, and many mathematicians have investigated quaternionic K\"{a}hler manifolds.
It is well known that any quaternionic K\"{a}hler manifold of dimension greater than $8$ is Einstein.
For any $4n$-dimensional $(n \geq 2)$ compact quaternionic K\"{a}hler manifold $M$ with positive scalar curvature, there exists on its twistor space $Z$, which is a K\"{a}hler Einstein $S^{2}$-bundle over $M$ and the projection onto $M$ is a Riemannian submersion with totally geodesic fibers \cite{Salamon}. 
Wolf classified the quaternionic K\"{a}hler manifolds with nonvanishing scalar curvature that are symmetric spaces \cite{Wolf}.
These symmetric spaces are called quaternionic K\"{a}hler symmetric spaces or Wolf spaces.
The only known examples of compact quaternionic K\"{a}hler manifolds with positive scalar curvature are the compact quaternionic K\"{a}hler symmetric spaces.

In this paper, we study totally complex submanifolds of a quaternionic K\"{a}hler manifold, which are analogous to complex submanifolds of a K\"{a}hler manifold.
Totally complex submanifolds were introduced by Funabashi \cite{Funabashi} and are defined as submanifolds endowed with an almost complex structure induced by the quaternionic K\"{a}hler structure.
Hence, any totally complex submanifold is an almost complex manifold.
Alekseevsky and Marchiafava proved that any totally complex submanifold of a quaternionic K\"{a}hler manifold with nonvanishing scalar curvature is a K\"{a}hler manifold with respect to the induced metric \cite{Alekseevsky-Marchiafava}.
Moreover, they showed that totally complex submanifolds are minimal \cite{Alekseevsky-Marchiafava2}.
It is known that there exists a 3-Sasakian $SO(3)$-bundle $S$ over a compact quaternionic K\"{a}hler manifold $M$ with positive scalar curvature.
The $SO(3)$-bundle $S$ is called the Konishi bundle \cite{Konishi}.
The metric cone $C(S)$ of $S$ is a hyperk\"{a}hler manifold.
Recently, Aslan, Karigiannis, and Madnick discovered a relationship among totally complex submanifolds of a quaternionic K\"{a}hler manifold $M$, complex Legendrian submanifolds of its twistor space $Z$, Legendrian submanifolds of the Konishi bundle $S$, and complex Lagrangian submanifolds of the metric cone $C(S)$ \cite{Aslan-Karigiannis-Madnick}.
Thus, totally complex submanifolds are closely related to important submanifolds of various manifolds, and constructing examples of totally complex submanifolds is an interesting problem.

The construction and classification of totally complex submanifolds of compact quaternionic K\"{a}hler symmetric spaces have been investigated extensively.
For example, Takeuchi classified totally geodesic totally complex submanifolds of maximal dimension of compact quaternionic K\"{a}hler symmetric spaces \cite{Takeuchi}.
Tsukada classified parallel totally complex submanifolds of maximal dimension of quaternionic projective spaces \cite{Tsukada}.
Moreover, Bedulli, Gori, and Podest\'{a} proved that compact homogeneous totally complex submanifolds of maximal dimension of quaternionic projective spaces are parallel \cite{Bedulli-Gori-Podesta}.
Thus, compact homogeneous totally complex submanifolds of maximal dimension of quaternionic projective spaces have been completely classified.
Kimura studied totally complex submanifolds of a complex 2-plane Grassmannian.
In particular, he discovered a relationship between Hopf real hypersurfaces of a complex projective space and totally complex submanifolds of maximal dimension of a complex 2-plane Grassmannian \cite{Kimura} \cite{Kimura2}.
Moreover, Tsukada classified compact homogeneous totally complex submanifolds of maximal dimension of a complex 2-plane Grassmannian \cite{Tsukada3}.
Enoyoshi and Tsukada constructed an example of a compact totally complex submanifold of maximal dimension of the associative Grassmannian $G_{2}/SO(4)$ \cite{Enoyoshi-Tsukada}.
However, these examples of totally complex submanifolds are all compact.
There are few examples of non-compact totally complex submanifolds aside from open parts of compact ones.

In the present paper, we construct an example of a non-compact totally complex submanifold of maximal dimension of a compact quaternionic K\"{a}hler symmetric space other than quaternionic projective spaces.
A compact Lie group acts on our example isometrically, and this action is of cohomogeneity one.
Moreover, our example is a holomorphic line bundle over a Hermitian symmetric space of compact type.
Each fiber is a totally geodesic submanifold of the ambient space and our example is a ruled submanifold.
Also, we show that our example is not contained in any compact submanifold of the same dimension as an open part, except where the ambient compact quaternionic K\"{a}hler symmetric space is a complex Grassmannian of rank 2.
In Section 2, we recall some definitions concerning quaternionic K\"{a}hler manifolds and totally complex submanifolds.
Moreover, we review the construction of compact quaternionic K\"{a}hler symmetric spaces as given in \cite{Wolf}.
In Section 3, we construct our example and prove that it is a totally complex submanifold of maximal dimension .
We also study whether there exists a compact submanifold of the same dimension that contains our example as an open part.

\section{Preliminaries}

Let $(M,g)$ be a $4n$-dimensional Riemannian manifold and $Q$ a rank $3$ subbundle of the endomorphism bundle $\mathrm{End}TM$.
We call $(M,Q,g)$ a quaternionic K\"{a}hler manifold if it satisfies the following conditions:

\begin{itemize}

\item[(a)]
For any $p \in M$, there exists a local frame $\{I, J, K \}$ of $Q$ defined on a neighborhood of $p$ such that
\[
\begin{split}
& I^{2} = J^{2} = K^{2} = -\mathrm{id}, \\
& IJ = -JI = K, \ \ JK = -KJ = I, \ \ KI = -IK = J. \\
\end{split}
\]

\item[(b)]
For any $p \in M, A \in Q_{p}$, and $X,Y \in T_{p}M$, $g(AX,Y) + g(X,AY) = 0$.

\item[(c)]
The vector bundle $Q$ is parallel with respect to the Levi-Civita connection of $g$.

\end{itemize}

It is well known that a quaternionic K\"{a}hler manifold is Einstein if $n \geq 2$.
In this paper, we assume that $\dim M \geq 8$.
Set $Z = \{ I \in Q \ ;\ I^{2} = -\mathrm{id} \}$.
Then, $Z$ is an $S^{2}$-bundle over $M$ and called the {\it twistor space} of $M$.

We recall the definition of totally complex submanifolds of quaternionic K\"{a}hler manifolds.
Let $N$ be a submanifold of $M$.
If there exist a section $I \in \Gamma(Z|_{N})$ such that $I(TN) \subset TN$, then $N$ is called an {\it almost complex submanifold} \cite{Alekseevsky-Marchiafava}.
Then, $(N,I)$ with the induced metric by $g$ is almost Hermitian.
If $I$ is integrable on $N$, then $N$ is called a {\it complex submanifold} \cite{Alekseevsky-Marchiafava}.
Moreover, if $I$ is parallel on $N$ with respect to the Levi-Civita connection of the induced metric, then $N$ is called a {\it K\"{a}hler submanifold} \cite{Alekseevsky-Marchiafava}.
For an almost complex submanifold $(N,I)$, set $Z_{I} := \{ J \in Z|_{N} \ ;\ IJ = -JI \}$.
If $J(TN) \perp TN$ for any $J \in Z_{I}$, then $N$ is called a {\it totally complex submanifold} \cite{Funabashi}.
If the scalar curvature of $M$ is nonzero, then an almost complex submanifold $N$ is K\"{a}hler if and only if it is totally complex \cite{Alekseevsky-Marchiafava}.
It is easy to see that $2 \dim N \leq \dim M$ for any totally complex submanifold $N$.
If $2\dim N = \dim M$, then $N$ is called a totally complex submanifold of maximal dimension.

Next, we recall the construction of a compact quaternionic K\"{a}hler symmetric space by Wolf \cite{Wolf}.
Let $G$ be a simply connected simple compact Lie group and $\frak{g}$ be the Lie algebra of $G$.
Choose a maximal abelian subspace $\frak{a}$ of $\frak{g}$.
We denote the complexifications of $\frak{g}$ and $\frak{a}$ by $\frak{g}^{\mathbb{C}}$ and $\frak{a}^{\mathbb{C}}$, respectively.
Then, $\frak{a}^{\mathbb{C}}$ is a Cartan subalgebra of $\frak{g}^{\mathbb{C}}$.
Let $\tau$ be the complex conjugation of $\frak{g}^{\mathbb{C}}$ corresponding to $\frak{g}$.
Denote by $(\ ,\ )$ the Killing form of $\frak{g}^{\mathbb{C}}$ and set $\langle \ ,\ \rangle = -(\ ,\ )|_{\frak{g} \times \frak{g}}$.
Then, $\langle \ ,\ \rangle$ is a $G$-invariant inner product on $\frak{g}$.
Let $\Sigma$ be the root system of $\frak{g}^{\mathbb{C}}$ with respect to $\frak{a}^{\mathbb{C}}$.
For each $\alpha \in \Sigma$, set $H_{\alpha} \in i\frak{a}$ such that $\alpha(H) = (H_{\alpha}, H)$ for any $H \in \frak{a}^{\mathbb{C}}$.
For any $\alpha, \beta \in \Sigma$, we denote $(H_{\alpha}, H_{\beta})$ by $(\alpha, \beta)$.
Set $A_{\alpha} = (2/(\alpha, \alpha))H_{\alpha}$ for each $\alpha \in \Sigma$.
Take a linear order on $i\frak{a}$ and denote the set of all positive roots by $\Sigma^{+}$.
Let $\beta \in \Sigma^{+}$ be the highest root.
For each $n \in \mathbb{Z}$, we set $\Sigma_{n} = \{ \alpha \in \Sigma\ ;\ (2(\beta, \alpha)/(\beta, \beta)) = n \}$.
Then, 
\[
\Sigma = \Sigma_{-2} \sqcup \Sigma_{-1} \sqcup \Sigma_{0} \sqcup \Sigma_{1} \sqcup \Sigma_{2}, \quad\quad \Sigma_{\pm 2} = \{ \pm \beta \}.
\]
Define $\theta = \mathrm{exp}\pi(iA_{\beta}) \in G$.
Then, $\theta$ is an involutive element of $G$, that is $\theta^{2} = e$, where $e$ is the unit element of $G$.
Denote by the same symbol the inner automorphism of $G$ induced by $\theta$.
Moreover, we denote by the same symbol the induced automorphism of $\frak{g}$ by $\theta$.
Set $\frak{k} = \{ X \in \frak{g}\ ;\ \theta(X) = X \}$ and $\frak{m} = \{ X \in \frak{g}\ ;\ \theta(X) = -X \}$.
Then, $\frak{g} = \frak{k} + \frak{m}$.
Let $\pi_{\frak{m}}:\frak{g} \rightarrow \frak{m}$ be the orthogonal projection with respect to $\langle\ ,\ \rangle$.
Set $K = \{ g \in G\ ;\ \theta(g) = g \}$.
The Lie algebra of $K$ is $\frak{k}$.
Since $G$ is simply connected, $K$ is connected.
Moreover, $(G,K)$ is a compact Riemannian symmetric pair, and $M = G/K$ is a compact symmetric space.
Set $o = eK$.
Then, $T_{o}M$ is identified with $\frak{m}$.
The $G$-invariant metric of $M$ induced by $\langle\ ,\ \rangle$ is also denoted by the same symbol.

For each $\alpha \in \Sigma$, define $\tilde{\frak{g}}_{\alpha} = \{ X \in \frak{g}^{\mathbb{C}}\ ;\ [H,X] = \alpha(H)X\ (H \in \frak{a}^{\mathbb{C}}) \}$.
Let $X_{\alpha} \in \tilde{\frak{g}}_{\alpha}$ satisfy the following conditions:

\begin{itemize}

\item[(a)]
$\tau(X_{\alpha}) = -X_{-\alpha}$, 

\item[(b)]
$[X_{\alpha}, X_{-\alpha}] = A_{\alpha}$,

\item[(c)]
for $\alpha, \gamma \in \Sigma\ (\gamma \not= -\alpha)$, if $\alpha + \gamma \in \Sigma$ then $[X_{\alpha}, X_{\gamma}] = N_{\alpha, \gamma}X_{\alpha + \gamma}$, where $N_{\alpha, \gamma} = \pm (p + 1)$ and $p$ is the greatest integer such that $\gamma - p \alpha \in \Sigma$, and if $\alpha + \gamma \not\in \Sigma$ then $[X_{\alpha}, X_{\gamma}] = 0$.

\end{itemize}

For each $\alpha \in \Sigma^{+}$, we define $Z_{\alpha} = X_{\alpha} + \tau(X_{\alpha}) = X_{\alpha} - X_{-\alpha}$ and $W_{\alpha} = i(X_{\alpha} - \tau(X_{\alpha})) = i(X_{\alpha} + X_{-\alpha})$.
Then,
\[
\begin{split}
\frak{g} &= \frak{a} + \sum_{\gamma \in \Sigma^{+}}(\mathbb{R}Z_{\gamma} + \mathbb{R}W_{\gamma}), \\
\frak{k} &= \frak{a} + (\mathbb{R}Z_{\beta} + \mathbb{R}W_{\beta}) + \sum_{\gamma \in \Sigma^{+} \cap \Sigma_{0}}(\mathbb{R}Z_{\gamma} + \mathbb{R}W_{\gamma}), \\
\frak{m} & = \sum_{\gamma \in \Sigma_{1}}(\mathbb{R}Z_{\gamma} + \mathbb{R}W_{\gamma}).
\end{split}
\]
Set $\frak{s} = \mathbb{R}(iA_{\beta}) + \mathbb{R}Z_{\beta} + \mathbb{R}W_{\beta}$.
Then, $\frak{s}$ is isomorphic to $\frak{sp}(1)$ and forms a $3$-dimensional ideal of $\frak{k}$.
In particular, $\mathrm{Ad}(k)(\frak{s}) \subset \frak{s}$ for any $k \in K$.
Set the equivalence relation $\sim$ on $G \times \frak{s}$ such that $(g_{1}, X_{1}) \sim (g_{2}, X_{2})$ if and only if $g_{2}^{-1}g_{1} \in K$ and $\mathrm{Ad}(g_{2}^{-1}g_{1})X_{1} = X_{2}$.
Denote the quotient space $G \times \frak{s}/\sim$ by $G \times_{K} \frak{s}$.
Moreover, the equivalence class of $(g, X) \in G \times \frak{s}$ is denoted by $[(g,X)]$.
We associate to $[(g,X)] \in G \times_{K} \frak{s}$ the endomorphism of $T_{g(o)}M$ given by $g \circ \mathrm{ad}(X)|_{\frak{m}} \circ g^{-1}$.
This correspondence defines a quaternionic structure $Q$ on $M$ and $(M, Q, \langle\ ,\ \rangle)$ is a quaternionic K\"{a}hler manifold.
Define $S(\frak{s}) = \{ a(iA_{\beta}) + bZ_{\beta} + cW_{\beta}\ ;\ a,b,c \in \mathbb{R}, a^{2} + b^{2} + c^{2} = 1\}$.
Since $\mathrm{Ad}(k)(S(\frak{s})) \subset S(\frak{s})$ for any $k \in K$, we can consider $G \times_{K} S(\frak{s})$.
Then $G \times_{K} S(\frak{s})$ can be identified with the twistor space $Z$ of $M$.
We consider the $G$-action on $G \times_{K} S(\frak{s})$ defined by $G \times (G \times_{K} S(\frak{s})) \rightarrow G \times_{K} S(\frak{s})\ ;\ (g, [(h,X)]) \mapsto [(gh, X)]$.
Since the $K$-action $K \times S(\frak{s}) \rightarrow S(\frak{s})\ ;\ (k,X) \mapsto \mathrm{Ad}(k)X$ is transitive, the $G$-action on $G \times_{K} S(\frak{s})$ is also transitive.
Moreover, $G \times_{K} S(\frak{s}) \ni [(g,X)] \mapsto \mathrm{Ad}(g)(X) \in \mathrm{Ad}(G)(iA_{\beta})$ is a $G$-equivariant diffeomorphism between $G \times_{K} S(\frak{s})$ and $\mathrm{Ad}(G)(iA_{\beta})$.
Thus, we can identify the twistor space $Z$ with $\mathrm{Ad}(G)(iA_{\beta})$.
Finally, we introduce some notations.
For any $1 \leq k \leq n$,
\[
\begin{split}
G^{o}_{k}(\mathbb{R}^{n}) & = SO(n)/SO(k) \times SO(n-k), \\
G_{k}(\mathbb{C}^{n}) &= SU(n)/S(U(k) \times U(n-k)), \\
G_{k}(\mathbb{H}^{n}) &= Sp(n)/Sp(k) \times Sp(n-k). \\
\end{split}
\]
Table 1 lists all compact quaternionic K\"{a}hler symmetric spaces $M = G/K$.

\begin{table}[htbp]

\centering
\begin{tabular}{|c|c|c|c|} \hline

$M$ &$G$ & $K$ & $\dim M$ \\ \hline\hline
$G_{1}(\mathbb{H}^{n})\ (n \geq 3)$ & $Sp(n)$ & $Sp(1) \times Sp(n-1)$ & $4(n-1)$ \\ \hline 
$G_{2}(\mathbb{C}^{n})\ (n \geq 4)$ & $SU(n)$ & $S(U(2) \times U(n-2))$ & $4(n-2)$ \\ \hline 
$G^{o}_{4}(\mathbb{R}^{n})\ (n \geq 7)$ & $Spin(n)$ & $Spin(4) \cdot Spin(n-4))$ & $4(n-4)$ \\ \hline

$G$ & $G_{2}$ & $SO(4)$ & 8  \\ \hline 
$FI$ & $F_{4}$ & $Sp(1) \cdot Sp(3)$ & 28  \\ \hline 
$EII$ & $E_{6}$ & $Sp(1) \cdot SU(6)$ & 40 \\ \hline
$EVI$ & $E_{7}$ & $Sp(1) \cdot Spin(12)$ & 64 \\ \hline
$EIX$ & $E_{8}$ & $Sp(1) \cdot E_{7}$ & 112 \\ \hline

\end{tabular}
\caption{compact quaternionic K\"{a}hler symmetric spaces}
\end{table}


\section{Construction}

In this section, we construct a non-compact totally complex submanifold $N$ of maximal dimension of $M$.
Throughout this section, we assume $G \not= Sp(n)$, that is, $M \not= \mathbb{H}P^{n}$.
Let $\delta \in \Sigma_{1}$ such that $(\delta, \delta) = (\beta, \beta)$ (note that there are no such $\delta$ when $G = Sp(n)$).
Since the curve $b(t) = \mathrm{exp}(tZ_{\delta})\ (0 \leq t \leq 2\pi)$ is a closed one-parameter subgroup of $G$ and $Z_{\delta} \in \frak{m}$, the curve $a(t) = (\mathrm{exp}(tZ_{\delta}))(o)\ (0 \leq t \leq \pi)$ is a closed geodesic of $M$.
Moreover, since $b(t)\ (0 \leq t \leq 2\pi)$ is a shortest closed one-parameter subgroup with respect to the invariant metric defined by the inner product $\langle \ ,\ \rangle$ on $\frak{g}$, it follows that $a(t)\ (0 \leq t \leq \pi)$ is a shortest closed geodesic (\cite{Helgason}, CHAPTER VII, Section 11).
Set $p = a(\pi/2)$.
A maximal totally geodesic sphere with maximal sectional curvature is called a Helgason sphere.
By \cite{Helgason}, CHAPTER VII, Section 11, Theorem 11.1, the dimension of a Helgason sphere is $2$ for any compact quaternionic K\"{a}hler symmetric space except for  quaternionic projective spaces.
Define $\overline{M}_{p} = \mathrm{exp}(\mathbb{R}Z_{\delta} + \mathbb{R}W_{\delta})(o)$.
Then, $\overline{M}_{p}$ is a Helgason sphere through both $o$ and $p$.
Set $M_{p} = \overline{M}_{p} \backslash \{ o \}$.
Let $2i(M)$ be the length of any great circle of a Helgason sphere.

\begin{thm}\cite{Yang} \label{radius}
The injectivity radius of $M$ is $i(M)$.

\end{thm}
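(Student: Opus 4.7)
The plan is to establish $\operatorname{inj}(M) \le i(M)$ and $\operatorname{inj}(M) \ge i(M)$ separately, using the geometry of the Helgason sphere $\overline{M}_{p}$ in both directions. For the upper bound, note that $\overline{M}_{p}$ is a round totally geodesic $2$-sphere whose great circles are smooth closed geodesics of $M$ of length $2 i(M)$. Along any closed geodesic $\gamma$ of length $L$ through a point $q$, the point $\gamma(L/2)$ is reached from $q$ by two distinct geodesic arcs each of length $L/2$, which forces a cut point of $q$ at distance at most $L/2$. Applying this to $o$ and a great circle of $\overline{M}_{p}$ through $o$ gives $\operatorname{inj}_{o}(M) \le i(M)$, and $G$-homogeneity propagates the bound to every point.

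For the lower bound I would invoke the classical Klingenberg estimate
\[
\operatorname{inj}(M) \ge \min\bigl(\operatorname{conj}(M),\; \ell/2\bigr),
\]
valid for any compact Riemannian manifold, where $\operatorname{conj}(M)$ is the conjugate radius and $\ell$ is the length of a shortest closed geodesic; this is proved through the dichotomy that at a point realizing $\operatorname{inj}(M)$ the nearest cut point is either conjugate or the common endpoint of two minimizing geodesics whose terminal velocities are antipodal, so that their concatenation is a smooth closed geodesic of length $2\operatorname{inj}(M)$. Then $\operatorname{conj}(M) \ge i(M)$ follows from Rauch comparison: the Helgason sphere $\overline{M}_{p}$, being totally geodesic and by definition realizing the maximum sectional curvature $K_{\max}$ of $M$, has constant curvature $K_{\max} = \pi^{2}/i(M)^{2}$ (since its great circles have length $2 i(M)$), whence $\operatorname{conj}(M) \ge \pi/\sqrt{K_{\max}} = i(M)$.

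The estimate $\ell \ge 2 i(M)$ is the content of the already cited Helgason result (Chapter VII, Section 11): $b(t) = \exp(t Z_{\delta})$ is a shortest nontrivial closed one-parameter subgroup of $G$ with respect to $\langle\ ,\ \rangle$, so $a(t) = b(t)(o)$ is a shortest closed geodesic of $M$, and it lies on $\overline{M}_{p}$ as a great circle of length $2 i(M)$. Combining the three inputs yields $\operatorname{inj}(M) = i(M)$. The main obstacle is the first-variation step behind the Klingenberg estimate, namely producing an honest closed geodesic (not merely a geodesic loop) from two minimizing arcs at a point minimizing the cut function; once this is in hand the remainder of the argument is a transparent assembly of Rauch comparison with the root-theoretic input supplied by Helgason.
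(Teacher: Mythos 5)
Your argument is correct, but it is genuinely different from what the paper does: the paper offers no proof at all and simply defers to Yang, whose approach is to determine the cut locus of a simply connected compact symmetric space explicitly in terms of the Cartan polyhedron and the unit lattice in $\frak{a}$. You instead assemble the equality from three classical inputs: (i) a closed geodesic of length $L$ forces $\operatorname{inj}_{o}(M)\le L/2$, applied to a great circle of $\overline{M}_{p}$ and propagated by homogeneity; (ii) Klingenberg's lemma $\operatorname{inj}(M)\ge\min(\operatorname{conj}(M),\ell/2)$ for compact $M$; and (iii) Helgason's Chapter VII, Section 11 results, which give both that the Helgason sphere realizes $K_{\max}$ (so that $\operatorname{conj}(M)\ge\pi/\sqrt{K_{\max}}=i(M)$ by Rauch, since its great circles have length $2i(M)$) and that $a(t)$ is a shortest closed geodesic (so that $\ell=2i(M)$). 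All three steps are sound, and the third is exactly the root-theoretic content the paper already invokes in Section 3, so your proof is self-contained relative to the paper's standing references. What Yang's route buys that yours does not is a description of the entire cut locus, not just its distance from the base point; what yours buys is brevity and transparency about where the geometry enters. One point worth making explicit: simple connectedness of $M=G/K$ (which holds here because $G$ is simply connected and $K$ is connected) is hidden inside the input $\ell=2i(M)$ --- for a non-simply-connected quotient there are shorter closed geodesics and the theorem fails --- so you should flag that this is where that hypothesis is consumed. You might also note that for simply connected compact symmetric spaces Crittenden's theorem (cut locus $=$ first conjugate locus) collapses your two lower-bound branches into the single equality $\operatorname{inj}(M)=\operatorname{conj}(M)=\pi/\sqrt{K_{\max}}$, bypassing Klingenberg entirely.
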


The connected component of the fixed point set of a geodesic symmetry $s_{o}$ at $o \in M$ is called a polar of $o$ \cite{Chen-Nagano}.
A polar is a totally geodesic submanifold. 
Denote the polar through $p$ by $M_{o}^{+}(p)$.
Then, $M_{o}^{+}(p)$ coincides with the $K$-orbit $K(p)$.
In particular, $M_{o}^{+}(p)$ is a totally geodesic (locally) totally complex submanifold of maximal dimension of $M$ \cite{Takeuchi}.

Let $\frak{h}$ be the orthogonal complement of $\frak{s}$ in $\frak{k}$ and $\frak{a}_{\beta}$ the orthogonal complement of $\mathbb{R}(iA_{\beta})$ in $\frak{a}$.
Then, we have
\[
\frak{h} = \frak{a}_{\beta} + \sum_{\gamma \in \Sigma^{+} \cap \Sigma_{0}}(\mathbb{R}Z_{\gamma} + \mathbb{R}W_{\gamma}).
\]
Let $H$ be the connected subgroup of $K$ whose Lie algebra is $\frak{h}$.
Then, $H$ is compact and either $K = Sp(1) \times H$ or $K = Sp(1) \cdot H$.
Consider the $H$-orbit $H(p)$.
Define $H_{p} = \{ h \in H\ ;\ h(p) = p \}$ and let $\frak{h}_{p}$ be the Lie algebra of $H_{p}$.
Then, we have $H(p) = H/H_{p}$.
Set $\theta_{p} = \big( \mathrm{exp}(\pi/2)Z_{\delta} \big) \theta \big( \mathrm{exp}(-(\pi/2)Z_{\delta}) \big) \in G$.
Since
\[
\begin{split}
\Big( \mathrm{exp}\frac{\pi}{2}Z_{\delta} \Big) \Big( \mathrm{exp}\pi(iA_{\beta}) \Big) \Big( \mathrm{exp}(-\frac{\pi}{2}Z_{\delta}) \Big) 
&= \mathrm{exp}(\pi iA_{\beta - \delta}),
\end{split}
\]
both $\frak{k}$ and $\frak{s}$ are invariant under $\theta_{p}$.
Consequently, we obtain $\theta_{p}(\frak{h}) \subset \frak{h}$ and $\theta_{p}(H) \subset H$.
Since the isotropy subgroup of $G$ at $p$ is given by $\{ g \in G\ ;\ \theta_{p}(g) = g\}$, we obtain $H_{p} = \{ h \in H\ ;\ \theta_{p}(h) = h \}$ and $\frak{h}_{p} = \{ X \in \frak{h}\ ;\ \theta_{p}(X) = X \}$.
Hence, $(H,H_{p})$ is a compact Riemannian symmetric pair, and $H(p)$ is a totally geodesic submanifold of $M$.
For each $M$, we list $K(p)$ and $H(p)$ in Table \ref{list}.
We see that $H_{p}$ is connected and $H(p)$ is a Hermitian symmetric space of compact type.

\begin{table}[htbp]

\small

\centering
\begin{tabular}{|c|c|c|c|c|c|c|c|c|c} \hline

$M$ & $K(p)$ & $H(p)$  \\ \hline
$G_{2}(\mathbb{C}^{n})\ (n \geq 4)$ & $\mathbb{C}P^{1} \times \mathbb{C}P^{n-3}$ & $\mathbb{C}P^{n-3}$  \\ \hline 
$G^{o}_{4}(\mathbb{R}^{7})$ & $\big( (S^{2} \times S^{2}) \times S^{2} \big)/\mathbb{Z}_{2}$ & $S^{2} \times S^{2}$  \\ \hline
$G^{o}_{4}(\mathbb{R}^{n})\ (n \geq 8)$ & $\big( (S^{2} \times S^{2}) \times G^{o}_{2}(\mathbb{R}^{n-4}) \big) /\mathbb{Z}_{2}$ & $S^{2} \times G^{o}_{2}(\mathbb{R}^{n-4})$  \\ \hline

$G$ & $(S^{2} \times S^{2})/\mathbb{Z}_{2}$ & $S^{2}$ \\ \hline
$FI$ & $(S^{2} \times Sp(3)/U(3))/\mathbb{Z}_{2}$ & $Sp(3)/U(3)$  \\ \hline
$EII$ & $(S^{2} \times G_{3}(\mathbb{C}^{6}))/\mathbb{Z}_{2}$ & $G_{3}(\mathbb{C}^{6})$ \\ \hline 
$EVI$ & $(S^{2} \times SO(12)/U(6))/\mathbb{Z}_{2}$ & $SO(12)/U(6)$  \\ \hline
$EIX$ & $(S^{2} \times E_{7}/(U(1) \cdot E_{6}))/\mathbb{Z}_{2}$ & $E_{7}/(U(1) \cdot E_{6})$  \\ \hline

\end{tabular}
\caption{$K(p)$ and $H(p)$ for each compact quaternionic K\"{a}hler symmetric space}
\label{list}
\end{table}

\normalsize

For $\gamma \in \Sigma$ and $n \in \mathbb{Z}$, we set $\Sigma_{\gamma, n} = \{ \alpha \in \Sigma\ ;\ 2(\gamma, \alpha)/(\gamma, \gamma) = n \}$.
Moreover, we set $\Sigma_{\gamma, n}^{+} = \Sigma_{\gamma, n} \cap \Sigma^{+}$.
Note that $\Sigma_{\beta,n} = \Sigma_{n}$ for each $n \in \mathbb{Z}$ and $\Sigma_{\beta,0}^{+} = \Sigma^{+} \cap \Sigma_{0}$.
Since $\frak{h}_{p} = \{ X \in \frak{h}\ ;\ \theta_{p}(X) = X \}$, we have
\[
\frak{h}_{p} = \frak{a}_{\beta} + \sum_{\gamma \in \Sigma^{+}_{\beta, 0} \ \cap \ \Sigma_{\beta - \delta, 0}}(\mathbb{R}Z_{\gamma} + \mathbb{R}W_{\gamma}).
\]

\begin{lemm}\label{rotation}
$h(\overline{M}_{p}) \subset \overline{M}_{p}$ for any $h \in H_{p}$.

\end{lemm}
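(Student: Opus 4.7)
The plan is to reduce the geometric claim to a Lie-algebraic statement about $\mathrm{Ad}(H_p)$ preserving the tangent plane $T_o\overline{M}_p$, and then verify that statement by a short root-system computation. Since $\overline{M}_p$ is totally geodesic and contains $o$, any isometry fixing $o$ maps $\overline{M}_p$ into itself if and only if it preserves $T_o\overline{M}_p$. Now every $h\in H_p$ lies in $K$, hence fixes $o$. Under the identification $T_oM\cong \frak{m}$ the differential of $h$ at $o$ is $\mathrm{Ad}(h)|_{\frak{m}}$, and $T_o\overline{M}_p=\mathbb{R}Z_\delta+\mathbb{R}W_\delta$. Thus the lemma is equivalent to
\[
\mathrm{Ad}(h)\bigl(\mathbb{R}Z_\delta+\mathbb{R}W_\delta\bigr)\subset \mathbb{R}Z_\delta+\mathbb{R}W_\delta\qquad(h\in H_p).
\]
Since $H_p$ is connected, it is enough to check that $\mathrm{ad}(X)$ preserves $\mathbb{R}Z_\delta+\mathbb{R}W_\delta$ for every $X$ in the spanning set $\frak{a}_\beta\cup\{Z_\gamma,W_\gamma:\gamma\in\Sigma^{+}_{\beta,0}\cap\Sigma_{\beta-\delta,0}\}$ of $\frak{h}_p$.

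For $X\in\frak{a}_\beta$ the computation is immediate: $[X,X_{\pm\delta}]=\pm\delta(X)X_{\pm\delta}$, and since $\delta(X)\in i\mathbb{R}$ for $X\in\frak{a}$ one obtains $[X,Z_\delta]\in\mathbb{R}W_\delta$ and $[X,W_\delta]\in\mathbb{R}Z_\delta$. So $\mathrm{ad}(X)$ acts on the plane $\mathbb{R}Z_\delta+\mathbb{R}W_\delta$ as a skew rotation, and in particular preserves it.

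The main step is the case $X\in\{Z_\gamma,W_\gamma\}$ with $\gamma\in\Sigma^{+}_{\beta,0}\cap\Sigma_{\beta-\delta,0}$. For such a $\gamma$ we have $(\gamma,\beta)=0$ and, since $(\gamma,\beta-\delta)=0$ as well, also $(\gamma,\delta)=0$. I claim that neither $\gamma+\delta$ nor $\gamma-\delta$ lies in $\Sigma$. Indeed, if either were a root then its squared length would equal $|\gamma|^2+|\delta|^2>|\delta|^2=|\beta|^2$, contradicting the fact that $\beta$, being the highest root of the simple complex Lie algebra $\frak{g}^{\mathbb{C}}$, is a root of maximal length. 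Consequently
\[
[X_{\pm\gamma},X_{\pm\delta}]=0
\]
for all four sign choices, which in turn yields $[Z_\gamma,Z_\delta]=[Z_\gamma,W_\delta]=[W_\gamma,Z_\delta]=[W_\gamma,W_\delta]=0$. Thus $\mathrm{ad}(X)$ annihilates the plane $\mathbb{R}Z_\delta+\mathbb{R}W_\delta$.

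Assembling these two cases, $\mathrm{ad}(\frak{h}_p)$ stabilises $\mathbb{R}Z_\delta+\mathbb{R}W_\delta$, hence so does $\mathrm{Ad}(H_p)$ by connectedness of $H_p$, and the lemma follows. The only nontrivial point is the length comparison used to rule out $\gamma\pm\delta\in\Sigma$; this is where the hypothesis $(\delta,\delta)=(\beta,\beta)$ (i.e.\ that $\delta$ has the same length as the highest root) enters in an essential way, and is also the reason the construction excludes $M=\mathbb{H}P^n$, where no such $\delta\in\Sigma_1$ exists.
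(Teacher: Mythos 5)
Your proof is correct and follows essentially the same route as the paper: reduce to showing $\mathrm{ad}(\frak{h}_p)$ preserves $\mathbb{R}Z_\delta+\mathbb{R}W_\delta$ via connectedness of $H_p$, treat $\frak{a}_\beta$ as a rotation, and kill the brackets with $Z_\gamma, W_\gamma$ by showing $(\gamma,\delta)=0$ and $\gamma\pm\delta\notin\Sigma$. The only difference is that you supply the length argument (using that $\delta$ is a long root, so $|\gamma\pm\delta|^2=|\gamma|^2+|\delta|^2$ would exceed the maximal root length) justifying $\gamma\pm\delta\notin\Sigma$, a step the paper asserts without proof.
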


\begin{proof}
Since $H_{p}$ is connected and $H \subset K$, it suffices to show that $\mathrm{ad}(T)(\mathbb{R}Z_{\delta} + \mathbb{R}W_{\delta}) \subset \mathbb{R}Z_{\delta} + \mathbb{R}W_{\delta}$ for any $T \in \frak{h}_{p}$.
Suppose that $T \in \frak{a}_{\beta}$.
Then, $[T, Z_{\delta}] \in \mathbb{R}W_{\delta}$ and $[T, W_{\delta}] \in \mathbb{R}Z_{\delta}$.
Let $Z_{\gamma}\ (\gamma \in \Sigma_{\beta, 0}^{+} \cap \Sigma_{\beta- \delta, 0})$.
Since $(\gamma, \beta) = 0$ and $(\gamma, \beta - \delta) = 0$, we have $(\gamma, \delta) = 0$.
Thus, $\gamma \pm \delta \not\in \Sigma$, and $[Z_{\gamma}, Z_{\delta}] = [Z_{\gamma}, W_{\delta}] = 0$.
By a similar argument, we also obtain $[W_{\gamma}, Z_{\delta}] = [W_{\gamma}, W_{\delta}] = 0$.
\qed
\end{proof}

Since $H_{p}$ fixes $o$, we have $H_{p}(M_{p}) \subset M_{p}$.
Note that $M_{p}$ is diffeomorphic to $\mathbb{R}^{2}$ since $\overline{M}_{p}$ is a $2$-dimensional sphere.
The action of $H_{p}$ on $M_{p}$ is by rotations and the fixed point set is $\{ p \}$.
Consider the associated bundle $H \times_{H_{p}} M_{p}$.
Then, $H \times_{H_{p}} M_{p}$ is a rank $2$ vector bundle over $H(p)$.
Define $N = \{ h(a)\ ;\ h \in H, a \in M_{p} \}$ and let $\pi:H \times M_{p} \rightarrow N$ be the map given by $\pi(h,a) = h(a)$ for any $h \in H$ and $a \in M_{p}$.

\begin{lemm} \label{fiber}
Let $(h_{1}, a_{1})$ and $(h_{2}, a_{2})$ be elements of $H \times M_{p}$.
Then, $\pi(h_{1}, a_{1}) = \pi(h_{2}, a_{2})$ if and only if $[(h_{1}, a_{1})] = [(h_{2}, a_{2})] \in H \times_{H_{p}} M_{p}$.

\end{lemm}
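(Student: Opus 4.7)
The plan is to dispose of the $\Leftarrow$ direction by direct substitution and then reduce the nontrivial direction to showing that $k := h_2^{-1} h_1 \in H$ automatically lies in $H_{p}$. From the hypothesis $h_1(a_1) = h_2(a_2)$ this $k$ satisfies $k(a_1) = a_2$, and once $k \in H_{p}$ is established the equivalence $(h_1, a_1) \sim (h_2, a_2)$ follows from the convention $(h,a) \sim (hk^{-1},ka)$ on $H \times M_p$.

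Because $H \subset K$ fixes $o$, the isometry $k$ satisfies $k(o) = o$, and hence $d_M(o, a_1) = d_M(o, a_2)$. Combining Theorem \ref{radius} with the structure of $\overline{M}_p$ as a totally geodesic $2$-sphere on which $p = a(\pi/2)$ is antipodal to $o$ at distance $i(M)$, the point $p$ is the unique element of $M_p$ at distance $i(M)$ from $o$, while every other element of $M_p$ lies at distance strictly less than $i(M)$. Thus $a_1 = p$ if and only if $a_2 = p$, and in that case $k(p) = p$ is immediate.

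In the remaining case $a_1, a_2 \neq p$, injectivity of $\exp_o$ on the open ball of radius $i(M)$ in $\frak{m}$ produces unique $X_i \in V := \mathbb{R}Z_\delta + \mathbb{R}W_\delta$ with $\|X_i\| < i(M)$ and $\exp_o(X_i) = a_i$. The isotropy equivariance $k \cdot \exp_o(X) = \exp_o(\mathrm{Ad}(k)X)$ together with the same injectivity forces $\mathrm{Ad}(k)(X_1) = X_2$. I then extend the unit-speed geodesic $\gamma(t) = \exp_o(t X_1/\|X_1\|)$, which lies in the totally geodesic sphere $\overline{M}_p$, to parameter $t = i(M)$, where $\gamma$ reaches the antipode of $o$ in $\overline{M}_p$, namely $p$. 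Applying $k$ and using equivariance, $k \circ \gamma$ becomes $t \mapsto \exp_o(t X_2/\|X_2\|)$, again a geodesic of $\overline{M}_p$ (since $X_2 \in V$), which at $t = i(M)$ also equals $p$ by the same antipodal argument. Therefore $k(p) = k(\gamma(i(M))) = p$, as required.

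The subtle point — and the main potential pitfall — is that one never needs $k$ itself to preserve $\overline{M}_p$ in order to conclude $k(p) = p$; it suffices that the single geodesic $k \circ \gamma$ is again a geodesic of $\overline{M}_p$, which is automatic from $\mathrm{Ad}(k)(X_1) \in V$. Behind this lies the fact that $d_M(o,q) < i(M)$ for every $q \in M_p \setminus \{p\}$, a consequence of Theorem \ref{radius} and the equality $i(M) = \|(\pi/2)Z_\delta\|$ coming from the normalization $(\delta,\delta) = (\beta,\beta)$.
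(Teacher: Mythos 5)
Your proof is correct and follows essentially the same strategy as the paper's: both reduce the nontrivial direction to showing that $k = h_{2}^{-1}h_{1}$ fixes $p$, and both deduce this from Theorem \ref{radius} combined with the geometry of the Helgason sphere $\overline{M}_{p}$ (namely that $p$ is the antipode of $o$ at distance $i(M)$). The only difference is cosmetic: the paper first uses the rotation action of $H_{p}$ to normalize $a_{i} = k_{i}\exp(t_{i}Z_{\delta})(o)$ and argues on the single geodesic $a(t)$, obtaining $\mathrm{Ad}(k)Z_{\delta} = \pm Z_{\delta}$, whereas you work with arbitrary $X_{i} \in \mathbb{R}Z_{\delta} + \mathbb{R}W_{\delta}$ and follow the geodesic to the antipode; the key injectivity-radius step ($\mathrm{Ad}(k)X_{1} = X_{2}$) is the same in both.
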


\begin{proof}
Assume that $[(h_{1}, a_{1})] = [(h_{2}, a_{2})]$.
Then, by definition, $h_{2}^{-1}h_{1} \in H_{p}$ and $(h_{2}^{-1}h_{1})(a_{1}) = a_{2}$.
Thus, we obtain $h_{1}(a_{1}) = h_{2}(a_{2})$ and $\pi(h_{1}, a_{1}) = \pi(h_{2}, a_{2})$.
Conversely, suppose that $\pi(h_{1}, a_{1}) = \pi(h_{2}, a_{2})$.
It suffices to show that $h_{2}^{-1}h_{1} \in H_{p}$.
Let $k_{i} \in H_{p}$ and $t_{i} \in [ -\pi/2, \pi/2 ] \backslash \{ 0 \}$ for $i = 1,2$ be $a_{1} = k_{1} \mathrm{exp}(t_{1}Z_{\delta})(o)$ and $a_{2} = k_{2}\mathrm{exp}(t_{2}Z_{\delta})(o)$.
Set $k = k_{2}^{-1}h_{2}^{-1}h_{1}k_{1} \in H$.
Since $h_{1}(a_{1}) = h_{2}(a_{2})$, we have
\[
k \big( \mathrm{exp}(t_{1}Z_{\delta}) \big)(o) = \mathrm{exp}(t_{2}Z_{\delta})(o)
\]
and $t_{2} = \pm t_{1}$.
If $t_{1} = \pm \pi/2$, then $k(p) = p$ and $k \in H_{p}$.
Hence, $h_{2}^{-1}h_{1} \in H_{p}$.
On the other hand, if $t_{1} \not= \pm \pi/2$, then $t_{1}\mathrm{Ad}(k)Z_{\delta} = t_{2}Z_{\delta}$ by Theorem \ref{radius}.
Hence, $\mathrm{Ad}(k)Z_{\delta} = \pm Z_{\delta}$ and
\[
p =  \mathrm{exp} \Big( \frac{\pi}{2}Z_{\delta} \Big)(o) = \mathrm{exp} \big( \pm \frac{\pi}{2}\mathrm{Ad}(k)Z_{\delta} \big) (o) = k \Big( \mathrm{exp}(\pm \frac{\pi}{2}Z_{\delta}) \Big)(o) = k(p). 
\]
Therefore, we obtain $k \in H_{p}$ and $h_{2}^{-1}h_{1} \in H_{p}$.
\qed
\end{proof}

By Lemma \ref{fiber}, $N \cong H \times_{H_{p}} M_{p}$ as a manifold, and $N$ is a submanifold of $M$.
Moreover, $N$ is a $\mathrm{rank}$ 2 vector bundle over $H(p)$.
The action of $H$ on $N$ is of cohomogeneity one.
For example, $\{ a(t)\ ;\ 0 < t \leq \pi/2 \}$ is an orbit space of this action.
Hence $N = \{ ha(t)\ ;\ h \in H, 0 < t \leq \pi/2 \}$.
By direct calculations, we obtain $2\dim N = \dim M$.

Next, we consider the tangent space of $N$ at each point.
Let $C = \{ a(t)\ ;\ 0 < t < \pi/2 \}$.
First, we assume $0 < t < \pi/2 $.
Since $T_{a(t)}N = T_{a(t)}C + T_{a(t)}H(a(t))$, we have
\[
b(t)^{-1}T_{a(t)}N = \mathbb{R}Z_{\delta} + \pi_{\frak{m}}(\mathrm{Ad}(b(t)^{-1})\frak{h}).
\]
For each $T \in \frak{h}$, we study $\pi_{\frak{m}}(\mathrm{Ad}(b(t)^{-1})T)$.
Define subspaces $\frak{h}_{1}, \cdots, \frak{h}_{4}$ of $\frak{h}$ by
\[
\begin{array}{ll}
\frak{h}_{1} = \frak{a}_{\beta}, & \frak{h}_{2} = \sum_{\gamma \in \Sigma^{+}_{\beta, 0} \cap \Sigma_{\delta, 0}}(\mathbb{R}Z_{\gamma} + \mathbb{R}W_{\gamma}), \\
\frak{h}_{3} = \sum_{\gamma \in \Sigma^{+}_{\beta, 0} \cap \Sigma_{\delta, 1}}(\mathbb{R}Z_{\gamma} + \mathbb{R}W_{\gamma}), & \frak{h}_{4} = \sum_{\gamma \in \Sigma^{+}_{\beta, 0} \cap \Sigma_{\delta, -1}}(\mathbb{R}Z_{\gamma} + \mathbb{R}W_{\gamma}). \\
\end{array}
\]
Then, $\frak{h} = \sum_{i=1}^{4}\frak{h}_{i}$ and $\frak{h}_{p} = \frak{h}_{1} + \frak{h}_{2}$.
By a similar argument to the proof of Lemma \ref{rotation}, we have $\pi_{\frak{m}}(\mathrm{Ad}(b(t)^{-1})\frak{h}_{1}) = \mathbb{R}W_{\delta}$ because $t \not= \pi/2$.
Moreover, $\pi_{\frak{m}}(\mathrm{Ad}(b(t)^{-1})\frak{h}_{2}) = \{ 0 \}$.
On the other hand, $\mathrm{ad}(Z_{\delta})$ gives a complex structure on $\sum_{\gamma \in \Sigma_{\delta,1}}(\mathbb{R}Z_{\gamma} + \mathbb{R}W_{\gamma})$, and $[Z_{\delta}, Z_{\gamma}] = \pm Z_{\delta - \gamma}$ and $[Z_{\delta}, W_{\gamma}] = \pm W_{\delta - \gamma}$ for any $\gamma \in \Sigma_{\delta, 1}$.
Thus, since $t \not= 0$, 
\[
\begin{split}
\pi_{\frak{m}}(\mathrm{Ad}(b(t)^{-1})\frak{h}_{3}) = \frak{h}_{3}, \quad
\pi_{\frak{m}}(\mathrm{Ad}(b(t)^{-1})\frak{h}_{4}) = \frak{h}_{4}. 
\end{split}
\]
Define a subspace $\frak{m}_{N}$ of $\frak{m}$ by
\[
\frak{m}_{N} = \mathbb{R}Z_{\delta} + \mathbb{R}W_{\delta} + \sum_{\gamma \in \Sigma^{+}_{\beta,0} \cap \Sigma_{\delta, 1}}(\mathbb{R}Z_{\delta - \gamma} + \mathbb{R}W_{\delta - \gamma}) + \sum_{\gamma \in \Sigma^{+}_{\beta,0} \cap \Sigma_{\delta, -1}}(\mathbb{R}Z_{\delta + \gamma} + \mathbb{R}W_{\delta + \gamma}). 
\]
We have $b(t)^{-1}T_{a(t)}N = \frak{m}_{N}$.
Next, we assume $t = \pi/2$.
Then, since $T_{p}N = T_{p}M_{p} + T_{p}H(p)$, we obtain $b(\pi/2)^{-1}T_{p}N = \mathbb{R}Z_{\delta} + \mathbb{R}W_{\delta} + \pi_{\frak{m}}(\mathrm{Ad}(b(\pi/2)^{-1})\frak{h})$.
It is clear that $\pi_{\frak{m}}(\mathrm{Ad}(b(\pi/2)^{-1})\frak{h}_{i})$ is the same as the case of $t \not= \pi/2$ for $2 \leq i \leq 4$.
We can easily verify that $\pi_{\frak{m}}(\mathrm{Ad}(b(\pi/2)^{-1}\frak{h}_{1}) = \{ 0 \}$.
Thus, $b(\pi/2)^{-1}T_{p}N = \frak{m}_{N}$.

In the following, we show that $N$ is a totally complex submanifold of $M$.
Define the subsets $\Delta_{+}$ and $\Delta_{-}$ of $\Sigma_{\beta, 1} \cap \Sigma_{\delta, 1}$ as follows:
\[
\Delta_{+} = \{ \delta - \gamma\ ;\ \gamma \in \Sigma^{+}_{\beta, 0} \cap \Sigma_{\delta, 1}\}, \quad\quad
\Delta_{-} = \{ \delta + \gamma\ ;\ \gamma \in \Sigma^{+}_{\beta, 0} \cap \Sigma_{\delta, -1}\}.
\]
Then, $\frak{m}_{N} = (\mathbb{R}Z_{\delta} + \mathbb{R}W_{\delta}) + \sum_{\gamma \in \Delta_{+} \cup \Delta_{-}}(\mathbb{R}Z_{\gamma} + \mathbb{R}W_{\gamma})$.

\begin{lemm} \label{root}
It follows that
\[
\begin{array}{ll}
\Sigma_{\beta, 1} \cap \Sigma_{\delta,1} = \Delta_{+} \sqcup \Delta_{-}, & \Sigma_{\beta, 1} \cap \Sigma_{\delta, 0} = \{ \beta - \gamma\ ;\ \gamma \in \Sigma_{\beta,1} \cap \Sigma_{\delta,1}\}, \\
\Sigma_{\beta, 1} \cap \Sigma_{\delta, 2} = \{ \delta \}, & \Sigma_{\beta, 1} \cap \Sigma_{\delta, -1} = \{ \beta - \delta \}. \\
\end{array}
\]

\end{lemm}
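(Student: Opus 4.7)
\emph{Plan.} The proof is a sequence of elementary root-system manipulations. The main tools are: (i) for any two roots $\alpha, \eta \in \Sigma$ that are not proportional, the integers $2(\alpha,\eta)/(\eta,\eta)$ and $2(\eta,\alpha)/(\alpha,\alpha)$ have the same sign and satisfy the Cauchy--Schwarz-type bound $\bigl|2(\alpha,\eta)/(\eta,\eta) \cdot 2(\eta,\alpha)/(\alpha,\alpha)\bigr| \leq 3$; (ii) the root-string rule, which in particular yields $\alpha - \eta \in \Sigma$ whenever $\alpha \neq \eta$ are roots with $2(\alpha,\eta)/(\eta,\eta) > 0$, and $\alpha + \eta \in \Sigma$ whenever $\alpha \neq -\eta$ are roots with $2(\alpha,\eta)/(\eta,\eta) < 0$; (iii) the maximality $(\alpha,\alpha) \leq (\beta,\beta) = (\delta,\delta)$ for every $\alpha \in \Sigma$, since $\beta$ is the highest root and $\delta$ has been chosen so that $(\delta,\delta)=(\beta,\beta)$. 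I will dispatch the four identities in turn.

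For $\Sigma_{\beta,1} \cap \Sigma_{\delta,2}$: if $2(\alpha,\delta)/(\delta,\delta) = 2$, then Cauchy--Schwarz forces $(\alpha,\alpha) \geq (\delta,\delta)$; maximality of $(\delta,\delta)$ combined with the equality case of Cauchy--Schwarz yields $\alpha = \delta$, which visibly lies in the intersection. For $\Sigma_{\beta,1} \cap \Sigma_{\delta,-1}$: given such an $\alpha$, the root-string rule provides $\alpha + \delta \in \Sigma$; linearity of the level functions puts $\alpha + \delta \in \Sigma_{\beta,2} = \{\beta\}$, so $\alpha = \beta - \delta$, and conversely $\beta - \delta$ is a root (by the $\delta$-string through $\beta$) lying in the intersection.

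For $\Sigma_{\beta,1} \cap \Sigma_{\delta,0}$: I show $\gamma \mapsto \beta - \gamma$ is a bijection onto $\Sigma_{\beta,1} \cap \Sigma_{\delta,1}$. Any $\gamma \in \Sigma_{\beta,1}$ satisfies $2(\beta,\gamma)/(\gamma,\gamma) = (\beta,\beta)/(\gamma,\gamma) \geq 1$, so $\beta - \gamma$ is a root by the root-string rule; the level conditions transfer immediately by linearity in the first argument, and the map is clearly involutive on the union of the two sets.

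For $\Sigma_{\beta,1} \cap \Sigma_{\delta,1} = \Delta_{+} \sqcup \Delta_{-}$: given $\alpha$ in the left-hand set, $2(\alpha,\delta)/(\delta,\delta) = 1$ (together with $\alpha\neq\delta$, since otherwise $\alpha\in\Sigma_{\delta,2}$) gives $\delta - \alpha \in \Sigma$ via the root-string rule, and linearity places $\delta - \alpha \in \Sigma_{\beta,0} \cap \Sigma_{\delta,1}$; according as $\delta - \alpha$ is positive or negative in the chosen order, $\alpha$ lies in $\Delta_{+}$ or $\Delta_{-}$, and the two sets are disjoint since no positive root is negative. The reverse inclusions $\Delta_{\pm} \subset \Sigma_{\beta,1} \cap \Sigma_{\delta,1}$ are verified by the same root-string argument applied to the $\gamma$-string through $\delta$. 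The one point meriting attention is that in the non-simply-laced cases a given $\gamma$ may be short, so $2(\delta,\gamma)/(\gamma,\gamma)$ can equal $\pm2$ or $\pm3$ rather than $\pm1$; the root-string rule still applies uniformly, so this is a bookkeeping issue rather than a genuine obstacle.
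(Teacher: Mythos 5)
Your proof is correct and follows essentially the same route as the paper: linearity of the level functions $\alpha \mapsto 2(\beta,\alpha)/(\beta,\beta)$ and $\alpha \mapsto 2(\delta,\alpha)/(\delta,\delta)$, the root-string rule, and the maximality of $(\beta,\beta)=(\delta,\delta)$ to pin down $\Sigma_{\delta,\pm 2}=\{\pm\delta\}$ and $\Sigma_{\beta,2}=\{\beta\}$. The only (immaterial) divergence is that for $\Sigma_{\beta,1}\cap\Sigma_{\delta,-1}$ you locate $\alpha+\delta$ in $\Sigma_{\beta,2}=\{\beta\}$ whereas the paper locates $\lambda-\beta$ in $\Sigma_{\delta,-2}=\{-\delta\}$; you also supply details (e.g.\ the Cauchy--Schwarz argument for $\Sigma_{\beta,1}\cap\Sigma_{\delta,2}=\{\delta\}$) that the paper dismisses as obvious.
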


\begin{proof}
It is obvious that $\Delta_{+} \sqcup \Delta_{-} \subset \Sigma_{\beta,1} \cap \Sigma_{\delta,1}$.
Let $\gamma \in \Sigma_{\beta, 1} \cap \Sigma_{\delta,1}$.
Then, either $\delta - \gamma \in \Sigma_{\beta, 0}^{+} \cap \Sigma_{\delta, 1}$ or $\gamma - \delta \in \Sigma_{\beta, 0}^{+} \cap \Sigma_{\delta, -1}$. 
Since $\gamma = \delta - (\delta - \gamma)$ and $\gamma = \delta + (\gamma - \delta)$, we have $\Sigma_{\beta, 1} \cap \Sigma_{\delta,1} \subset \Delta_{+} \cup \Delta_{-}$.
We can easily see $\{ \beta - \gamma\ ;\ \gamma \in \Sigma_{\beta,1} \cap \Sigma_{\delta, 1} \} \subset \Sigma_{\beta,1} \cap \Sigma_{\delta,0}$.
Let $\epsilon \in \Sigma_{\beta, 1} \cap \Sigma_{\delta, 0}$.
Then, $\beta - \epsilon \in \Sigma_{\beta, 1} \cap \Sigma_{\delta, 1}$, so $\Sigma_{\beta, 1} \cap \Sigma_{\delta, 0} \subset \{ \beta - \gamma\ ;\ \gamma \in \Sigma_{\beta,1} \cap \Sigma_{\delta,1}\}$.
Obviously, $\Sigma_{\beta, 1} \cap \Sigma_{\delta, 2} = \{ \delta \}$.
Let $\lambda \in \Sigma_{\beta, 1} \cap \Sigma_{\delta, -1}$.
Then, $\lambda - \beta \in \Sigma_{\delta, -2}$, so $\lambda - \beta = -\delta$ and hence $\lambda = \beta - \delta$.
\qed
\end{proof}

By Lemma \ref{root}, we have 
\[
\frak{m}_{N} = (\mathbb{R}Z_{\delta} + \mathbb{R}W_{\delta}) + \sum_{\gamma \in \Sigma_{\beta, 1} \cap \Sigma_{\delta, 1}}(\mathbb{R}Z_{\gamma} + \mathbb{R}W_{\gamma}).
\] 
Let $\frak{m}_{N}^{\perp}$ be the orthogonal complement of $\frak{m}_{N}$ in $\frak{m}$.
Then, 
\[
\frak{m}^{\perp}_{N} = \mathbb{R}Z_{\beta- \delta} + \mathbb{R}W_{\beta - \delta} +  \sum_{\gamma \in \Sigma_{\beta,1} \cap \Sigma_{\delta,0}}(\mathbb{R}Z_{\gamma} + \mathbb{R}W_{\gamma}).
\]

\begin{lemm}\label{totally comp}
It follows that 
\[
[iA_{\beta}, \frak{m}_{N}] \subset \frak{m}_{N}, \quad 
[Z_{\beta}, \frak{m}_{N}] \perp \frak{m}_{N}, \quad
[W_{\beta}, \frak{m}_{N}] \perp \frak{m}_{N}.
\]

\end{lemm}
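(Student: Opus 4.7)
The plan is to verify each of the three relations by direct computation in the root-vector basis, relying on two structural inputs: the eigenvalue formula $[iA_\beta, X_\gamma] = \gamma(iA_\beta) X_\gamma = in\,X_\gamma$ for $\gamma \in \Sigma_n$, and the fact that $\beta$ is the highest root, so $\Sigma_{n} = \emptyset$ for $n \geq 3$.

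First I would handle $[iA_\beta, \frak{m}_N] \subset \frak{m}_N$. Converting the eigenvalue relation to the real basis gives $[iA_\beta, Z_\gamma] = n W_\gamma$ and $[iA_\beta, W_\gamma] = -n Z_\gamma$ for $\gamma \in \Sigma_n$. Every root appearing in the description of $\frak{m}_N$, namely $\delta$ together with the elements of $\Sigma_{\beta,1} \cap \Sigma_{\delta,1}$, lies in $\Sigma_1$. Hence $\mathrm{ad}(iA_\beta)$ preserves each summand $\mathbb{R} Z_\gamma + \mathbb{R} W_\gamma$ of $\frak{m}_N$, and the claim is immediate.

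For the two perpendicularity assertions I would expand $Z_\beta = X_\beta - X_{-\beta}$ and $W_\beta = i(X_\beta + X_{-\beta})$ and bracket against $Z_\gamma, W_\gamma$ for every $\gamma \in \Sigma_1$ appearing in $\frak{m}_N$. Since $\beta + \gamma \in \Sigma_3 = \emptyset$ and $\beta + \gamma \neq 0$, we have $[X_\beta, X_\gamma] = 0$; similarly $[X_{-\beta}, X_{-\gamma}] = 0$. The only surviving terms are multiples of $X_{\beta - \gamma}$ and $X_{\gamma - \beta}$, so each of the four brackets $[Z_\beta, Z_\gamma], [Z_\beta, W_\gamma], [W_\beta, Z_\gamma], [W_\beta, W_\gamma]$ is a real linear combination of $Z_{\beta - \gamma}$ and $W_{\beta - \gamma}$ only.

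The remaining step is a short root-system bookkeeping check that $\beta - \gamma$ always indexes a summand of $\frak{m}_N^\perp$. Using the hypothesis $(\delta, \delta) = (\beta, \beta)$ together with $2(\beta, \delta)/(\beta, \beta) = 1$, one also gets $2(\delta, \beta)/(\delta, \delta) = 1$. So for $\gamma = \delta$, $\beta - \delta \in \Sigma_{\beta, 1} \cap \Sigma_{\delta, -1}$, which matches the $\mathbb{R} Z_{\beta - \delta} + \mathbb{R} W_{\beta - \delta}$ summand of $\frak{m}_N^\perp$; for $\gamma \in \Sigma_{\beta, 1} \cap \Sigma_{\delta, 1}$ the same identities yield $2(\delta, \beta - \gamma)/(\delta, \delta) = 1 - 1 = 0$, so $\beta - \gamma \in \Sigma_{\beta, 1} \cap \Sigma_{\delta, 0}$, which is the other family of summands of $\frak{m}_N^\perp$. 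This finishes the proof. I do not expect a real obstacle; the only care needed is confirming that no term can land back in $\frak{m}_N$, and this is enforced entirely by the height grading once the input root lies in $\Sigma_1$, so the explicit values of the structure constants $N_{\alpha, \gamma}$ are irrelevant.
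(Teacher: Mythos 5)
Your proposal is correct and takes essentially the same route as the paper's own proof: the $\mathrm{ad}(iA_{\beta})$ eigenvalue computation on $\Sigma_{1}$ root spaces for the first inclusion, and the observation that $[Z_{\beta},\cdot]$ and $[W_{\beta},\cdot]$ carry the $\gamma$ root space into $\mathbb{R}Z_{\beta-\gamma}+\mathbb{R}W_{\beta-\gamma}$, which your $\delta$-grading bookkeeping correctly places in $\frak{m}_{N}^{\perp}$. Only your closing remark slightly misstates the mechanism --- it is the $\delta$-grading, not the $\beta$-height grading alone, that separates $\frak{m}_{N}$ from $\frak{m}_{N}^{\perp}$ inside $\Sigma_{\beta,1}$ --- but you had already performed that check explicitly, so there is no gap.
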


\begin{proof}
For each $\gamma \in \Sigma_{\beta, 1}$, it follows that $[iA_{\beta}, Z_{\gamma}] = W_{\gamma}$ and $[iA_{\beta}, W_{\gamma}] = -Z_{\gamma}$, so $[iA_{\beta}, \frak{m}_{N}] \subset \frak{m}_{N}$.
Let $\lambda \in \Sigma_{\beta, 1} \cap \Sigma_{\delta, 1}$.
Then, $[Z_{\beta}, Z_{\lambda}] = \pm Z_{\beta - \lambda}$ and $[Z_{\beta}, W_{\lambda}] = \pm W_{\beta - \lambda}$.
Moreover, $[Z_{\beta}, Z_{\delta}] = \pm Z_{\beta - \delta}$ and $[Z_{\beta}, W_{\delta}] = \pm W_{\beta- \delta}$.
Thus, we obtain $[Z_{\beta}, \frak{m}_{N}] \perp \frak{m}_{N}$.
Similarly, $[W_{\beta}, \frak{m}_{N}] \perp \frak{m}_{N}$.
\qed
\end{proof}

We set $I_{o} \in Z_{o}$ such that $I_{o} = \mathrm{ad}(iA_{\beta})|_{\frak{m}}$.
Let $Sp(1)_{\delta} = \mathrm{exp}(\mathbb{R}(iA_{\delta}) + \mathbb{R}Z_{\delta} + \mathbb{R}W_{\delta})$.
Then, $Sp(1)_{\delta}$ acts on $\overline{M}_{p}$ transitively.
Let $U(1)_{\delta} = \mathrm{exp} (\mathbb{R}(iA_{\delta}))$.
The isotropy subgroup of $Sp(1)_{\delta}$ at $o$ is $U(1)_{\delta}$.
For any $t \in \mathbb{R}$ and $X_{o} \in \frak{m}$,
\[
\begin{split}
& \mathrm{Ad}(\mathrm{exp}t(iA_{\delta})) \circ I_{o} \circ \mathrm{Ad}(\mathrm{exp}(-t)(iA_{\delta})) X_{o} \\ 
=\ & \mathrm{Ad}(\mathrm{exp}t(iA_{\delta}))[ iA_{\beta}, \mathrm{Ad}(\mathrm{exp}(-t)(iA_{\delta})) X_{o}] \\
=\ & [\mathrm{Ad}(\mathrm{exp}t(iA_{\delta}))(iA_{\beta}), X_{o}] \\
=\ & [iA_{\beta}, X_{o}] \\
=\ & I_{o}(X_{o}).
\end{split}
\]
Hence, we can define an $Sp(1)_{\delta}$-invariant section $\overline{I} \in \Gamma(Z|_{\overline{M_{p}}})$, that is, $\overline{I}_{g(o)} = g \circ I \circ g^{-1}$ for any $g \in Sp(1)_{\delta}$.

\begin{lemm}
$\overline{I}$ is $H_{p}$-invariant.

\end{lemm}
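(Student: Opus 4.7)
The plan is to reduce the $H_{p}$-invariance of $\overline{I}$ to two algebraic properties of $\frak{h}_{p}$, both of which are direct root-system computations in the spirit of Lemma \ref{rotation}. The first property is that $\mathrm{Ad}(H_{p})$ preserves $\frak{s}_{\delta} := \mathbb{R}(iA_{\delta}) + \mathbb{R}Z_{\delta} + \mathbb{R}W_{\delta}$, equivalently, that $H_{p}$ normalizes $Sp(1)_{\delta}$. I would verify this on each summand of $\frak{h}_{p} = \frak{a}_{\beta} + \sum_{\gamma \in \Sigma^{+}_{\beta, 0} \cap \Sigma_{\beta - \delta, 0}}(\mathbb{R}Z_{\gamma} + \mathbb{R}W_{\gamma})$. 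For $T \in \frak{a}_{\beta}$, $[T, iA_{\delta}] = 0$ because $\frak{a}$ is abelian, while $[T, Z_{\delta}]$ and $[T, W_{\delta}]$ are real scalar multiples of $W_{\delta}$ and $Z_{\delta}$ respectively, so $\mathrm{ad}(T)$ acts as an infinitesimal rotation of the $(Z_{\delta}, W_{\delta})$-plane. For $T = Z_{\gamma}$ or $W_{\gamma}$ with $\gamma$ in the indexing set, the proof of Lemma \ref{rotation} already shows $(\gamma, \delta) = 0$, hence $\gamma \pm \delta \notin \Sigma$ and $\mathrm{ad}(T)$ annihilates all of $iA_{\delta}, Z_{\delta}, W_{\delta}$.

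The second property is that $\mathrm{Ad}(h)$ commutes with $I_{o}$ on $\frak{m}$ for each $h \in H_{p}$. By the Jacobi identity this reduces to $\mathrm{Ad}(h)(iA_{\beta}) = iA_{\beta}$, i.e.\ $[\frak{h}_{p}, iA_{\beta}] = 0$. On $\frak{a}_{\beta}$ this is immediate since $\frak{a}$ is abelian, and on each $\mathbb{R}Z_{\gamma} + \mathbb{R}W_{\gamma}$ with $\gamma \in \Sigma^{+}_{\beta, 0}$ it follows from $\gamma(iA_{\beta}) \propto (\gamma, \beta) = 0$.

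With these two facts the invariance is a short manipulation. For $q \in \overline{M}_{p}$, transitivity of $Sp(1)_{\delta}$ on $\overline{M}_{p}$ lets me write $q = g(o)$ with $g \in Sp(1)_{\delta}$; since $H_{p} \subset K$ fixes $o$ and the first property gives $hgh^{-1} \in Sp(1)_{\delta}$, I have $h(q) = (hgh^{-1})(o)$, and therefore
\[
\overline{I}_{h(q)} = (hgh^{-1}) \circ I_{o} \circ (hgh^{-1})^{-1}
\]
by definition of $\overline{I}$. On the other hand, $h \circ \overline{I}_{q} \circ h^{-1} = h \circ g \circ I_{o} \circ g^{-1} \circ h^{-1}$; inserting $h^{-1}h$ between $g^{-1}$ and $h^{-1}$ and applying the second property $h \circ I_{o} = I_{o} \circ h$ on $\frak{m}$ rewrites this as the same expression, yielding the desired equality.

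I expect the bracket check for normalization of $Sp(1)_{\delta}$ to be the only substantive step, since it genuinely requires the intersection with $\Sigma_{\beta - \delta, 0}$ in the description of $\frak{h}_{p}$ and is not implied by $\frak{h}_{p} \subset \frak{h}$ alone; the remaining pieces are immediate from the structure constants already recorded in Lemma \ref{rotation}.
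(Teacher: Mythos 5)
Your proof is correct and follows essentially the same route as the paper: both arguments rest on the two facts that $H_{p}$ normalizes $Sp(1)_{\delta}$ (so that $h(g(o)) = (hgh^{-1})(o)$) and that $\mathrm{Ad}(h)$ fixes $iA_{\beta}$, and both conclude with the same conjugation computation. Your explicit Lie-algebra verification of the normalization is a detail the paper leaves implicit when it simply asserts the existence of $g_{2} \in Sp(1)_{\delta}$ with $hg_{1} = g_{2}h$.
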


\begin{proof}
For each $g \in G$, we identify the tangent space $T_{g(o)}M$ with $\mathrm{Ad}(g)\frak{m}$.
For $h \in G$, the differential $h:T_{g(o)}M \rightarrow T_{hg(o)}M$ corresponds to $\mathrm{Ad}(g)\frak{m} \ni X \mapsto \mathrm{Ad}(h)X \in \mathrm{Ad}(hg)\frak{m}$.
Then, for any $g \in Sp(1)_{\delta}$, the endomorphism $\overline{I}_{g(o)}$ corresponds to $\mathrm{Ad}(g) \circ \mathrm{ad}(iA_{\beta}) \circ \mathrm{Ad}(g^{-1})$.
Let $g_{1} \in Sp(1)_{\delta}$ and $a = g_{1}(o)$.
Fix $h \in H_{p}$.
Then, there exists $g_{2} \in Sp(1)_{\delta}$ such that $hg_{1} = g_{2}h$.
For any $X \in T_{h(a)}M = T_{g_{2}(o)}M = \mathrm{Ad}(g_{2})\frak{m}$,
\[
\begin{split}
\overline{I}_{h(a)}X
&=
\overline{I}_{g_{2}(o)}X 
=
\mathrm{Ad}(g_{2})[iA_{\beta}, \mathrm{Ad}(g_{2}^{-1})X]
=
[\mathrm{Ad}(g_{2})(iA_{\beta}), X] \\
&=
[\mathrm{Ad}(hg_{1}h^{-1})(iA_{\beta}), X] 
=
[\mathrm{Ad}(hg_{1})(iA_{\beta}), X] \\
&=
\mathrm{Ad}(hg_{1})[iA_{\beta}, \mathrm{Ad}(g_{1}^{-1}h^{-1})X] \\
&=
\mathrm{Ad}(h) \circ \mathrm{Ad}(g_{1}) \circ \overline{I}_{o} \circ \mathrm{Ad}(g_{1}^{-1}) \circ \mathrm{Ad}(h^{-1})(X) \\
&=
\mathrm{Ad}(h) \circ \overline{I}_{a} \circ \mathrm{Ad}(h^{-1})(X).
\end{split}
\]
Thus, it follows that $\overline{I}$ is $H_{p}$-invariant.
\qed
\end{proof}

\begin{lemm}\label{totally complex}
For each $x \in M_{p}$, it follows that $T_{x}N$ is invariant under $\overline{I}_{x}$.
Set $Z_{\overline{I}} = \{ J \in Z|_{\overline{M}_{p}}\ ;\ J \circ \overline{I} = - \overline{I} \circ J \}$.
Then, any $J_{x} \in (Z_{\overline{I}})_{x}$ satisfies $J_{x}(T_{x}N) \perp T_{x}N$.

\end{lemm}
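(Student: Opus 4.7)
The plan is to reduce to the case $x = a(t)$ for some $t \in (0, \pi/2]$, then translate both assertions into statements about $\frak{m}_N$ via the identification $T_{g(o)}M \cong \mathrm{Ad}(g)\frak{m}$. For the reduction, I would use Lemma~\ref{rotation} together with the observation (already noted just before Lemma~\ref{fiber}) that $H_p$ acts on $M_p$ by rotations with fixed point set $\{p\}$: this forces every $x \in M_p$ to be of the form $x = h(a(t))$ for some $h \in H_p$ and $t \in (0, \pi/2]$. Since $N$ is $H$-invariant (hence $H_p$-invariant) and $\overline{I}$ is $H_p$-invariant by the preceding lemma, both conclusions propagate from $a(t)$ to $x$ by conjugation with $h$.

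At $x = a(t) = b(t)(o)$, under the identification $T_{a(t)}M \cong \mathrm{Ad}(b(t))\frak{m}$, the tangent space $T_{a(t)}N$ corresponds to $\mathrm{Ad}(b(t))\frak{m}_N$ by the computation carried out in the paragraphs preceding Lemma~\ref{root}. Because $b(t) \in Sp(1)_\delta$ and $\overline{I}$ is $Sp(1)_\delta$-invariant with $\overline{I}_o = \mathrm{ad}(iA_\beta)|_{\frak{m}}$, the endomorphism $\overline{I}_{a(t)}$ is given by $\mathrm{Ad}(b(t)) \circ \mathrm{ad}(iA_\beta) \circ \mathrm{Ad}(b(t))^{-1}$. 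Thus the invariance $\overline{I}_{a(t)}(T_{a(t)}N) \subset T_{a(t)}N$ reduces immediately to $\mathrm{ad}(iA_\beta)(\frak{m}_N) \subset \frak{m}_N$, which is the first assertion of Lemma~\ref{totally comp}.

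For the orthogonality assertion, any $J_{a(t)} \in (Z_{\overline{I}})_{a(t)} \subset Q_{a(t)}$ corresponds under the same identification to $\mathrm{Ad}(b(t)) \circ \mathrm{ad}(X) \circ \mathrm{Ad}(b(t))^{-1}$ for some $X \in S(\frak{s})$. Since $iA_\beta$, $Z_\beta$, $W_\beta$ act on $\frak{m}$ as mutually anticommuting almost complex structures constituting the quaternionic structure $Q$ at $o$, the condition $J_{a(t)}\overline{I}_{a(t)} = -\overline{I}_{a(t)}J_{a(t)}$ forces $X \in \mathbb{R}Z_\beta + \mathbb{R}W_\beta$, say $X = aZ_\beta + cW_\beta$. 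Using that $\mathrm{Ad}(b(t))$ is an isometry of $(\frak{g}, \langle\ ,\ \rangle)$, the desired orthogonality reduces to $[aZ_\beta + cW_\beta, \frak{m}_N] \perp \frak{m}_N$, which follows from the remaining two assertions of Lemma~\ref{totally comp} by linearity. The main obstacle is this last bookkeeping step, where one must carefully justify that the anticommutation condition really selects $\mathbb{R}Z_\beta + \mathbb{R}W_\beta \subset \frak{s}$; everything else is routine transport along $\mathrm{Ad}(b(t))$ and $H_p$-equivariance.
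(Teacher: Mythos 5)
Your proposal is correct and follows essentially the same route as the paper: reduce to points $a(t)$ via the $H_p$-rotation action and the $H_p$-invariance of $\overline{I}$, identify $T_{a(t)}N$ with $\mathrm{Ad}(b(t))\frak{m}_N$ using the earlier tangent-space computation and the $Sp(1)_\delta$-invariance of $\overline{I}$, and then invoke Lemma~\ref{totally comp}. The only difference is cosmetic: you spell out why the anticommutation condition forces $J$ to come from $\mathbb{R}Z_\beta+\mathbb{R}W_\beta$, a step the paper leaves implicit in the phrase ``$Z_{\overline{I}}$ is invariant under both $Sp(1)_\delta$ and $H_p$.''
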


\begin{proof}
Let $g \in Sp(1)_{\delta}$ be $a(t) = g(o)$ for $0 < t \leq \pi/2$.
Since $\mathrm{Ad}(k)\frak{m}_{N} \subset \frak{m}_{N}$ for any $k \in U(1)_{\delta}$ and $g^{-1}b(t) \in U(1)_{\delta}$, we have
\[
g^{-1}T_{a(t)}N = g^{-1}b(t)b(t)^{-1}T_{a(t)}N = \mathrm{Ad}(g^{-1}b(t))\frak{m}_{N} = \frak{m}_{N}.
\]
Thus, $\overline{I}_{a(t)}(T_{a(t)}N) \subset T_{a(t)}N$ by Lemma \ref{totally comp}.
Let $h \in H_{p}$ and $q = h(a(t))$.
Since $\overline{I}$ is $H_{p}$-invariant, 
\[
\overline{I}_{q}(T_{q}N) = h \overline{I}_{a(t)} h^{-1} h(T_{a(t)}N) =h \overline{I}_{a(t)}(T_{a(t)}N) \subset T_{q}N.
\]
Hence, the former part of the statement follows.
Since $Z_{\overline{I}}$ is invariant under both $Sp(1)_{\delta}$ and $H_{p}$, the latter part of the statement follows from Lemma \ref{totally comp}.
\qed
\end{proof}

Denote the restriction of $\overline{I}$ to $M_{p}$ by $I$.
Since $\overline{I}$ is $H_{p}$-invariant and $H_{p}$ fixes $o$, $I$ is $H_{p}$-invariant.
We define the $H$-invariant section of $Z|_{N}$ by $I$ which is denoted by the same symbol such that
\[
I_{g(a)} = g \circ I_{a} \circ g^{-1}\ (g \in H, a \in M_{p}).
\]
By Lemma \ref{totally complex}, $(N, I)$ is a totally complex submanifold of maximal dimension of $M$.
We easily see that $I(T_{q}H(p)) \subset T_{q}H(p)$ for any $q \in H(p)$.
Hence, $(H(p), I|_{H(p)})$ is a complex submanifold of $(N,I)$.
Obviously, $(H(p), I|_{H(p)})$ is a Hermitian symmetric space of compact type with the standard complex structure.
As a vector bundle, $N \ni x \mapsto \mathrm{exp}\big( (\pi/2)(iA_{\beta}) \big)(x) \in N$ gives a complex structure on each fiber of $N$.
Thus, $N$ is a complex line bundle over $H(p)$.
The projection from $H \times_{H_{p}} M_{p}$ onto $H(p)$ is given by $H \times_{H_{p}} M_{p} \ni [(h,a)] \mapsto h(p) \in H(p)$.
Hence, the projection $\rho$ from $N$ onto $H(p)$ as a complex line bundle is given by $\rho(h(a)) \mapsto h(p)$ for any $h \in H$ and $a \in M_{p}$.
Then, we can see that $\rho:(N,I) \rightarrow (H(p), I|_{H(p)})$ is holomorphic.
Thus, $N$ is a holomorphic line bundle over $H(p)$.
Since $\overline{M}_{p}$ is a Helgason sphere of $M$, $N$ has totally geodesic fibers and $N$ is a ruled submanifold of $M$.
Hence, we obtain Theorem \ref{main}.

\begin{thm}\label{main}
$N$ is a totally complex submanifold of maximal dimension of $M$ by the section $I$ of $Z|_{N}$.
Moreover, $N$ is a holomorphic line bundle over the Hermitian symmetric space $H(p)$ of compact type and a ruled submanifold of $M$.
The group $H$ acts on $N$ and this action is of cohomogeneity one.
\end{thm}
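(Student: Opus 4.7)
My plan is to assemble four pieces already in place from Lemmas \ref{rotation}--\ref{totally complex}, the preceding tangent-space calculation, and the description of $\mathfrak{m}_N$.

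First, for the totally complex property at a general point $q = h(a) \in N$ with $h \in H$ and $a \in M_p$, I would transfer Lemma \ref{totally complex} using the $H$-invariance $I_{h(a)} = h \circ I_a \circ h^{-1}$ and $T_{h(a)}N = h_{\ast}(T_aN)$. Because $H$ acts by isometries preserving the quaternionic structure $Q$, the subset $Z_I|_q$ is the $h$-image of $Z_I|_a$, so both the invariance $I_q(T_qN) \subset T_qN$ and the orthogonality $J_q(T_qN) \perp T_qN$ for $J \in Z_I$ transfer automatically. The dimension count $2\dim N = \dim M$ drops out of the explicit description of $\mathfrak{m}_N$ together with Lemma \ref{root}, which matches the positive roots in $\Sigma_{\beta,1}$ into the four pieces that build $\mathfrak{m}_N$ and $\mathfrak{m}_N^{\perp}$.

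Second, Lemma \ref{fiber} identifies $N$ with $H \times_{H_p} M_p$, a rank-$2$ real vector bundle over $H/H_p = H(p)$. To upgrade this to a complex line bundle compatible with $I$, I would consider the fibrewise map $J : x \mapsto \exp((\pi/2)iA_{\beta})(x)$ acting on $M_p$. Since $\exp(\pi iA_{\beta}) = \theta$ acts as $-\mathrm{id}$ on $\mathfrak{m}$, $J$ squares to $-\mathrm{id}$ on each fiber; checking that $J$ commutes with $H_p$ lets $J$ descend to a fibrewise complex structure on $N$. A direct comparison with $I_o = \mathrm{ad}(iA_{\beta})|_{\mathfrak{m}}$ on the plane $\mathbb{R}Z_{\delta} + \mathbb{R}W_{\delta}$ identifies this fibrewise structure with the restriction of $I$ to the fibers. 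The projection $\rho(h(a)) = h(p)$ is $H$-equivariant, and its differential at $p$ takes $I$ on $T_pN$ to $I|_{H(p)}$ on $T_pH(p)$ by construction of $\mathfrak{m}_N$; spreading this by $H$-equivariance makes $\rho$ holomorphic everywhere.

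Third, each fiber of $\rho$ has the form $h(M_p)$, an open subset of the Helgason sphere $h(\overline{M}_p)$, which is totally geodesic in $M$; hence every fiber is totally geodesic and $N$ is ruled. Fourth, the description $N = \{h \cdot a(t) : h \in H,\ 0 < t \leq \pi/2\}$ exhibits $\{a(t) : 0 < t \leq \pi/2\}$ as a one-dimensional section meeting every $H$-orbit, so the $H$-action has cohomogeneity one. The main obstacle I anticipate is the sign verification in the second step: one must ensure that the fibrewise structure supplied by $\exp((\pi/2)iA_{\beta})$ coincides with $I$ on the fibers rather than differing by a sign, which reduces to an explicit bracket computation in the $\mathfrak{sp}(1)$-triple $\mathfrak{s}$. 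Once that is settled, the theorem is just a summary of the lemmas above.
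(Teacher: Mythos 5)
Your proposal is correct and follows essentially the same route as the paper: extend $\overline{I}|_{M_p}$ to an $H$-invariant section $I$ of $Z|_N$ and transfer Lemma \ref{totally complex} by $H$-equivariance, read off the dimension from $\mathfrak{m}_N$ and Lemma \ref{root}, obtain the fibrewise complex structure from $x \mapsto \exp\bigl((\pi/2)(iA_{\beta})\bigr)(x)$, and get the ruled and cohomogeneity-one statements from the Helgason sphere fibers and the section $\{a(t)\ ;\ 0 < t \leq \pi/2\}$. The sign check you flag (that the fibrewise structure agrees with $I$ on $\mathbb{R}Z_{\delta} + \mathbb{R}W_{\delta}$, via $[iA_{\beta}, Z_{\delta}] = W_{\delta}$) is a reasonable point of care that the paper passes over silently.
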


Next, we consider whether there exists a compact submanifold of the same dimension as $N$ that contains $N$ as an open part.
We assume that such a submanifold $S$ exists.
Then, $o \in S$ and $T_{o}S = \mathrm{span}_{\mathbb{R}}\{ r\mathrm{Ad}(h)Z_{\delta}\ ;\ r \in \mathbb{R},h \in H\}$.
Hence, $\mathrm{Ad}(H)(Z_{\delta})$ is a round sphere in some subspace of $\frak{m}$.
Since $T_{Z_{\delta}}\mathrm{Ad}(H)Z_{\delta} = [\frak{h}, Z_{\delta}]$ and $\frak{m}_{N} = \mathbb{R}Z_{\delta} + [\frak{h}, Z_{\delta}]$, it follows that $\mathrm{Ad}(H)Z_{\delta}$ is a round sphere in $\frak{m}_{N}$.
Let $0 < t < \pi/2$ and $L_{t} = \mathrm{exp}(t\mathrm{Ad}(H)Z_{\delta})(o)$.
Then, $L_{t}$ is an $S^{1}$-bundle over $H(p)$.
Set $U(1)_{\beta} = \mathrm{exp}(\mathbb{R}(iA_{\beta}))$.
Then, $U(1)_{\beta}$ acts on $L_{t}$.
In particular, $U(1)_{\beta}$ acts on each fiber of $L_{t}$ transitively and $U(1)_{\beta} \backslash L_{t} = H(p)$.
On the other hand, $U(1)_{\beta}$ acts on $\mathrm{Ad}(H)(Z_{\delta})$ and $\mathrm{Ad}(H)(Z_{\delta}) \ni X \mapsto (\mathrm{exp}tX)(o) \in L_{t}$ is a $U(1)_{\beta}$-equivariant diffeomorphism.
Since $\mathrm{ad}(iA_{\beta})|_{\frak{m}_{N}}$ gives a complex structure on $\frak{m}_{N}$ and $\mathrm{Ad}(H)Z_{\delta}$ is a round sphere in $\frak{m}_{N}$, we see that $U(1)_{\beta} \backslash \mathrm{Ad}(H)Z_{\delta}$ is a complex projective space.
Thus, $H(p)$ is also a complex projective space.
Hence, if $M$ is neither $G_{2}(\mathbb{C}^{n}) \ (n \geq 4)$ nor $G_{2}/SO(4)$, then such a submanifold $S$ does not exist.
If $M$ is $G_{2}(\mathbb{C}^{n})\ (n \geq 4)$, then it is easy to verify that $N$ is an open part of a totally complex totally geodesic submanifold $\mathbb{C}P^{n-2}$ through $o$.
In particular, $N = \mathbb{C}P^{n-2} \setminus \{ o \}$ and $N$ is an open part of $\mathbb{C}P^{n-2}$.

Let $M = G_{2}/SO(4)$.
We study whether $\mathrm{Ad}(H)Z_{\delta}$ is a round sphere in some subspace of $\frak{m}$.
By the above arguments, if $\mathrm{Ad}(H)Z_{\delta}$ is not a round sphere in any subspaces of $\frak{m}$, then there does not exist a compact submanifold of the same dimension that contains $N$ as an open part.
It is well known that any connected complete totally geodesic submanifold of a round sphere is the intersection of the round sphere with a subspace and vice versa.
We now examine whether $\mathrm{Ad}(H)Z_{\delta}$ is a totally geodesic submanifold of the round sphere in $\frak{m}$ through $Z_{\delta}$.
Let $e_{1}, \cdots, e_{7}$ be an orthonormal basis of $\mathbb{R}^{7}$ with respect to the standard inner product.
Define an endomorphism $G_{ij}\ (1 \leq i,j \leq 7)$ of $\mathbb{R}^{7}$ by
\[
G_{ij} (e_{k} ) =
\left\{
\begin{array}{lllll}
e_{j} & (k = i), \\
-e_{i} & (k = j), \\
0 & (k \not= i,j ).
\end{array}
\right.
\]
Set elements of $\frak{so}(7)$ as follows:
\[
\begin{array}{lrlrlllll}
V_{1}(\lambda, \mu, \nu) = & \lambda G_{23} + \mu G_{45} + \nu G_{67}, & V_{2}(\lambda, \mu, \nu) = & -\lambda G_{13} - \mu G_{46} + \nu G_{57}, \\
V_{3}(\lambda, \mu, \nu) = & \lambda G_{12} + \mu G_{47} + \nu G_{56}, & V_{4}(\lambda, \mu, \nu) = & -\lambda G_{15} + \mu G_{26} - \nu G_{37}, \\
V_{5}(\lambda, \mu, \nu) = & \lambda G_{14} - \mu G_{27} - \nu G_{36}, & V_{6}(\lambda, \mu, \nu) = & -\lambda G_{17} - \mu G_{24} + \nu G_{35}, \\
V_{7}(\lambda, \mu, \nu) = & \lambda G_{16} + \mu G_{25} + \nu G_{34}.
\end{array}
\]
Then, 
\[
\frak{g}_{2} = \mathrm{span}_{\mathbb{R}} \{ V_{i}(\lambda, \mu, \nu) \ |\ 1 \leq i \leq 7, \lambda + \mu + \nu = 0 \}
\]
is a Lie subalgebra of $\frak{so}(7)$ and the connected Lie subgroup of $SO(7)$ whose Lie algebra is $\frak{g}_{2}$ is the exceptional compact Lie group $G_{2}$ \cite{Yokota}.
For each $1 \leq i \leq 7$, we set $V_{i} = \mathrm{span}_{\mathbb{R}} \{ V_{i}(\lambda, \mu, \nu) \ ;\ \lambda, \mu, \nu \in \mathbb{R} \}$.
It is easy to verify that $V_{i} \cap \frak{g}_{2}$ is a maximal abelian subspace.
Set $\frak{a} = V_{1} \cap \frak{g}_{2}$.
For any $\lambda + \mu + \nu = 0$,
\[
\begin{split}
\big[ V_{1}(\lambda, \mu, \nu), \ V_{2}(0,-1,1) & \pm iV_{3}(0,1,-1) \big] \\ 
&= (\mp i)(\mu - \nu) \big( V_{2}(0,-1,1) + iV_{3}(0,1,-1) \big), \\
\big[ V_{1}(\lambda, \mu, \nu), \ V_{4}(0,-1,1) & \pm iV_{5}(0,1,-1) \big] \\
&= (\mp i)(\lambda - \nu) \big( V_{4}(0,-1,1) \pm iV_{5}(0,1,-1) \big), \\
\big[ V_{1}(\lambda, \mu, \nu), \ V_{6}(0,-1,1) & \pm iV_{7}(0,1,-1) \big] \\
&= (\mp i)(\lambda - \nu) \big( V_{4}(0,-1,1) \pm iV_{5}(0,1,-1) \big), \\
\end{split}
\]
\[
\begin{split}
\big[ V_{1}(\lambda, \mu, \nu), \ V_{3}(2,-1,-1) & \pm iV_{2}(2,-1,-1) \big] \\
&= (\mp i)2\lambda \big( V_{3}(2,-1,-1) \pm iV_{2}(2,-1,-1) \big), \\
\big[ V_{1}(\lambda, \mu, \nu), \ V_{5}(2,-1,-1) & \pm iV_{4}(2,-1,-1) \big] \\
&= (\mp i)2\mu \big( V_{5}(2,-1,-1) \pm iV_{4}(2,-1,-1) \big), \\
\big[ V_{1}(\lambda, \mu, \nu), \ V_{7}(2,-1,-1) & \pm iV_{6}(2,-1,-1) \big] \\
&= (\mp i)2\nu \big( V_{7}(2,-1,-1) \pm iV_{6}(2,-1,-1) \big). \\
\end{split}
\]
Hence, the root system of $\frak{g}_{2}^{\mathbb{C}}$ with respect to $\frak{a}^{\mathbb{C}}$ is $\{ (\pm i)(\lambda - \mu), (\pm i)(\lambda - \nu), (\pm i)(\mu - \nu), \pm i 2\lambda, \pm i 2\mu, \pm i 2\nu \}$.
Let $\beta = i(\mu - \nu)$ and $\delta = i(\mu - \lambda)$.
Then, we obtain
\[
\begin{split}
& \mathbb{R}Z_{\delta} = \mathbb{R}V_{6}(0,1,-1), \quad
\frak{k} = \sum_{i=1}^{3} V_{i} \cap \frak{g}_{2}, \quad \\
& \frak{h} = \mathbb{R}V_{1}(2,-1,-1) + \mathbb{R}V_{2}(2,-1,-1) + \mathbb{R}V_{3}(2,-1,-1).
\end{split}
\]
The invariant inner product $\langle \ , \ \rangle$ satisfies 
\[
\langle V_{i}(\lambda, \mu, \nu), \ V_{j}(\lambda', \mu', \nu') \rangle = 
\begin{cases}
8( \lambda\lambda' + \mu\mu' + \nu\nu') & (i = j), \\
0 & (i \not= j).
\end{cases}
\]
Note that $\{ H \in \frak{h} \ ;\ [H, Z_{\delta} ] = 0 \} = \{ 0 \}$.
Define $L = \mathrm{Ad}(H)Z_{\delta}$.
When we regard $Z_{\delta}$ as a point of $L$, we denote $Z_{\delta}$ by $a$.
Since $T_{a}L = [\frak{h}, Z_{\delta}]$ and
\[
\begin{split}
& [V_{1}(2,-1,-1), V_{6}(0,1,-1)] = V_{7}(0, -3, 3), \\
& [V_{2}(2,-1,-1), V_{6}(0,1,-1)] = V_{4}(-2, -1, -1), \\
& [V_{3}(2,-1,-1), V_{6}(0,1,-1)] = V_{5}(2, -1,-1),
\end{split}
\]
we obtain
\[
T_{a}L = \mathbb{R}V_{4}(2,-1,-1) + \mathbb{R}V_{5}(2,-1,-1) + \mathbb{R}V_{7}(0,1,-1).
\]
Let $N_{a}M$ be the normal space of $T_{a}L$ at $a$ in the round sphere in $\frak{m}$ through $a$.
Then,
\[
N_{a}L = \mathbb{R}V_{4}(0,1,-1) + \mathbb{R}V_{5}(0,1,-1) + \mathbb{R}V_{6}(2,-1,-1) + \mathbb{R}V_{7}(2,-1,-1).
\]
Let $\pi : \frak{m} \rightarrow N_{a}L$ be the orthogonal projection.
For any $X \in \frak{h}$, the Killing vector field on $L$ induced by $X$ is denoted by $X^{*}$.
Moreover, the second fundamental form of $L$ in the round sphere through $a$ in $\frak{m}$ is denoted by $h$.
For any $X, Y \in \frak{h}$,
\[
\begin{split}
h_{a}(X^{*}, Y^{*}) 
&= \pi \Big( \left. \frac{d}{ds}\frac{d}{dt} \mathrm{Ad}(\mathrm{exp}sX) \mathrm{Ad}(\mathrm{exp}tY)Z_{\delta} \right|_{s = t = 0}\Big) = \pi \big( \big[ X, [ Y, Z_{\delta} ] \big] \big).
\end{split}
\]
By direct calculations, we obtain
\[
\begin{split}
[ V_{1}(2,-1,-1), V_{7}(0,1,-1) ] &= V_{6}(0,-3,3), \\
[ V_{2}(2,-1,-1), V_{7}(0,1,-1) ] &= V_{5}(-2,1,1), \\
[ V_{3}(2,-1,-1), V_{7}(0,1,-1) ] &= V_{4}(2,-1,-1), \\ 
[ V_{1}(2,-1,-1), V_{4}(2,-1,-1) ] &= V_{5}(2,-1,-1), \\ 
[ V_{2}(2,-1,-1), V_{4}(2,-1,-1) ] &= V_{6}(-4,1,3), \\ 
[ V_{3}(2,-1,-1), V_{4}(2,-1,-1) ] &= V_{7}(4,-5,1), \\ 
[ V_{1}(2,-1,-1), V_{5}(2,-1,-1) ] &= V_{4}(2,-1,-1), \\ 
[ V_{2}(2,-1,-1), V_{5}(2,-1,-1) ] &= V_{7}(4,1,-5), \\ 
[ V_{3}(2,-1,-1), V_{5}(2,-1,-1) ] &= V_{6}(4,-5,1). \\
\end{split}
\]
Hence, $L$ is not a totally geodesic submanifold of the round sphere through $a$ in $\frak{m}$ and $\mathrm{Ad}(H)Z_{\delta}$ is not a round sphere in any subspace of $\frak{m}$.
Hence, there does not exist a compact submanifold of the same dimension that contains $N$ as an open part.
Summarizing these arguments, we obtain Proposition \ref{H(p)}.

\begin{prop} \label{H(p)}
If a compact quaternionic K\"{a}hler symmetric space $M$ is not $G_{2}(\mathbb{C}^{n})\ (n \geq 4)$, then there does not exist a compact submanifold of the same dimension as $N$ that contains $N$ as an open part.
If $M = G_{2}(\mathbb{C}^{n})\ (n \geq 4)$, then $N$ is an open part of a compact totally complex totally geodesic submanifold $\mathbb{C}P^{n-2}$.
In particular, $N = \mathbb{C}P^{n-2} \setminus \{ o \}$.

\end{prop}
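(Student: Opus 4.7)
The plan is to assume a compact submanifold $S$ of the same dimension as $N$ contains $N$ as an open part, extract structural consequences at the boundary point $o$, force $H(p)$ to be a complex projective space, and then treat the two surviving cases individually. First, since $S$ is compact and contains the open subset $N$, the closure point $o$ lies in $S$, and the tangent space $T_{o}S$ must contain the initial velocity of every smooth curve in $N$ ending at $o$. For each $h \in H$, the curve $t \mapsto h(a(t)) = \mathrm{exp}\big(t \mathrm{Ad}(h)Z_{\delta}\big)(o)$ is such a curve with initial velocity $\mathrm{Ad}(h)Z_{\delta}$, hence $T_{o}S = \mathrm{span}_{\mathbb{R}}\mathrm{Ad}(H)(Z_{\delta})$. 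Using $\frak{m}_{N} = \mathbb{R}Z_{\delta} + [\frak{h},Z_{\delta}]$, this span is exactly $\frak{m}_{N}$, and smoothness of $S$ at $o$ forces $\mathrm{Ad}(H)(Z_{\delta})$ to be a round sphere inside $\frak{m}_{N}$.

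Next, I would invoke the action of $U(1)_{\beta} = \mathrm{exp}(\mathbb{R}(iA_{\beta}))$, which on $\frak{m}_{N}$ reduces to the complex structure $\mathrm{ad}(iA_{\beta})|_{\frak{m}_{N}}$. For $0 < t < \pi/2$, set $L_{t} = \mathrm{exp}\big(t \mathrm{Ad}(H)(Z_{\delta})\big)(o)$. The map $\mathrm{Ad}(H)(Z_{\delta}) \ni X \mapsto \mathrm{exp}(tX)(o) \in L_{t}$ is a $U(1)_{\beta}$-equivariant diffeomorphism, and $U(1)_{\beta} \backslash L_{t} = H(p)$. Therefore, if $\mathrm{Ad}(H)(Z_{\delta})$ is a round sphere in $\frak{m}_{N}$, its quotient by the induced Hopf-type action is a complex projective space, forcing $H(p) \cong \mathbb{C}P^{m}$ for some $m$. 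Inspecting Table \ref{list}, the only $M$ for which $H(p)$ is a complex projective space are $G_{2}(\mathbb{C}^{n})\ (n \geq 4)$ with $H(p) = \mathbb{C}P^{n-3}$ and $G_{2}/SO(4)$ with $H(p) = S^{2} = \mathbb{C}P^{1}$; for every other $M$, no such $S$ can exist.

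Third, each surviving case is handled separately. For $M = G_{2}(\mathbb{C}^{n})$, Takeuchi's classification \cite{Takeuchi} of totally geodesic totally complex submanifolds of maximal dimension provides a $\mathbb{C}P^{n-2} \subset G_{2}(\mathbb{C}^{n})$ through $o$ tangent to $\frak{m}_{N}$. Being totally geodesic and $H$-invariant, this $\mathbb{C}P^{n-2}$ contains every $H$-orbit of a point $a(t) \in M_{p}$, hence contains $N$; comparing dimensions yields $N = \mathbb{C}P^{n-2} \setminus \{ o \}$. For $M = G_{2}/SO(4)$, I must instead show that $\mathrm{Ad}(H)(Z_{\delta})$ is not a round sphere in any subspace of $\frak{m}$. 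By the classical fact that a connected complete totally geodesic submanifold of a Euclidean sphere is the intersection with a linear subspace, it suffices to show that $L = \mathrm{Ad}(H)(Z_{\delta})$ is not totally geodesic in the sphere of radius $|Z_{\delta}|$ in $\frak{m}$; equivalently, its second fundamental form $h_{a}(X^{*}, Y^{*}) = \pi\big([X, [Y, Z_{\delta}]]\big)$, with $\pi$ the orthogonal projection onto the normal space $N_{a}L$ inside that sphere, does not vanish identically on $\frak{h} \times \frak{h}$.

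The main obstacle is this last computation. The general argument is a clean combination of cohomogeneity-one structure, root-system bookkeeping, and the classification in Table \ref{list}, but the $G_{2}/SO(4)$ case requires an explicit realization $\frak{g}_{2} \subset \frak{so}(7)$, a choice of $\frak{a}$ and of the root vectors $Z_{\delta}, W_{\delta}, Z_{\beta}, W_{\beta}$ in that realization, a basis of the $3$-dimensional $\frak{h}$, and direct computation of several nested brackets in order to exhibit $X, Y \in \frak{h}$ with $\pi\big([X,[Y,Z_{\delta}]]\big) \neq 0$. The computation is finite-dimensional and tractable, but it is the one step where a purely root-theoretic argument does not suffice.
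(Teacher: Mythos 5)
Your outline follows the paper's proof essentially step for step: the identification $T_{o}S = \mathrm{span}_{\mathbb{R}}\mathrm{Ad}(H)(Z_{\delta}) = \frak{m}_{N}$ forcing $\mathrm{Ad}(H)(Z_{\delta})$ to be a round sphere in $\frak{m}_{N}$, the $U(1)_{\beta}$-equivariant identification of $\mathrm{Ad}(H)(Z_{\delta})$ with the $S^{1}$-bundle $L_{t}$ over $H(p)$ forcing $H(p)$ to be a complex projective space, the reduction via Table \ref{list} to the two cases $G_{2}(\mathbb{C}^{n})$ and $G_{2}/SO(4)$, and the plan to kill the $G_{2}/SO(4)$ case by showing the second fundamental form $h_{a}(X^{*},Y^{*}) = \pi\big([X,[Y,Z_{\delta}]]\big)$ of $L = \mathrm{Ad}(H)(Z_{\delta})$ in the ambient round sphere is nonzero. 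Your treatment of the $G_{2}(\mathbb{C}^{n})$ case via Takeuchi's classification is in fact slightly more explicit than the paper's, which only asserts that the verification is easy.

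The one genuine gap is that you stop exactly at the step on which the $G_{2}/SO(4)$ case turns. Describing the computation is not the same as performing it: a priori the second fundamental form could vanish, in which case $L$ would be totally geodesic in the sphere, hence a round sphere in a subspace, and your argument would then fail to exclude a compact extension $S$ for $G_{2}/SO(4)$. The paper carries this out concretely: it realizes $\frak{g}_{2} \subset \frak{so}(7)$ via the elements $V_{i}(\lambda,\mu,\nu)$, takes $\frak{a} = V_{1} \cap \frak{g}_{2}$, chooses $\beta = i(\mu-\nu)$ and $\delta = i(\mu-\lambda)$, finds $\mathbb{R}Z_{\delta} = \mathbb{R}V_{6}(0,1,-1)$ and $\frak{h} = \mathbb{R}V_{1}(2,-1,-1) + \mathbb{R}V_{2}(2,-1,-1) + \mathbb{R}V_{3}(2,-1,-1)$, computes $T_{a}L$ and $N_{a}L$, and then exhibits brackets such as $[V_{2}(2,-1,-1),[V_{1}(2,-1,-1),Z_{\delta}]]$ whose projections to $N_{a}L$ are nonzero. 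Until you produce at least one such explicit pair $X, Y \in \frak{h}$ with $\pi\big([X,[Y,Z_{\delta}]]\big) \neq 0$, the first assertion of the proposition is not proved for $M = G_{2}/SO(4)$, and that is the only case (besides $G_{2}(\mathbb{C}^{n})$) not already eliminated by the $H(p) \cong \mathbb{C}P^{m}$ criterion.
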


\end{document}